\date{\today}
\newcolumntype{C}[1]{>{\centering\arraybackslash$}m{#1}<{$}}
\author{Nicolas Tholozan}
\address{University of Luxembourg, Campus Kirchberg \\
6, rue Richard Coudenhove-Kalergi\\
L-1359 Luxembourg}
\email{nicolas.tholozan@uni.lu}
\theoremstyle{theorem}
\newtheorem*{ExistenceProb}{Existence Problem}
\newtheorem*{VolumeProb}{Volume Problem}
\title[Volume of Clifford--Klein forms]{Volume and non-existence of compact Clifford--Klein forms}
\begin{document}

\maketitle

\begin{abstract}

This article studies the volume of compact quotients of reductive homogeneous spaces.
Let $G/H$ be a reductive homogeneous space and $\Gamma$ a discrete subgroup of $G$ acting properly discontinuously and cocompactly on $G/H$. We prove that the volume of $\Gamma \backslash G/H$ is the integral, over a certain homology class of $\Gamma$, of a $G$-invariant form on $G/K$ (where $K$ is a maximal compact subgroup of $G$).

As a corollary, we obtain a large class of homogeneous spaces the compact quotients of which have rational volume. For instance, compact quotients of pseudo-Riemannian spaces of constant curvature $-1$ and odd dimension have rational volume. This contrasts with the Riemannian case.

We also derive a new obstruction to the existence of compact Clifford--Klein forms for certain homogeneous spaces. In particular, we obtain that $\SO(p,q+1)/SO(p,q)$ does not admit compact quotients when $p$ is odd, and that $\SL(n,\R)/\SL(m,\R)$ does not admit compact quotients when $m$ is even.
\end{abstract}

\section*{Introduction}

The problem of understanding compact quotients of homogeneous spaces has a long history that can be traced back to the ``Erlangen program'' of Felix Klein \cite{Klein1872}. In the second half of the last century, the breakthroughs of Borel \cite{Borel63}, Mostow \cite{Mostow68}, Margulis \cite{Margulis91} and many others lead to a rather good understanding of quotients of \emph{Riemannian} homogeneous spaces. Comparatively, little is known about the non-Riemannian case, and in particular about quotients of pseudo-Riemannian homogeneous spaces.

In this paper we will mainly focus on reductive homogeneous spaces, i.e. quotients of a semi-simple Lie group $G$ by a closed reductive subgroup $H$. The $G$-homogeneous space $X=G/H$ carries a natural $G$-invariant pseudo-Riemannian metric (induced by the Killing metric of $G$) and therefore (up to taking a covering of degree $2$) a $G$-invariant volume form $\vol_X$. A quotient of $X$ by a discrete subgroup $\Gamma$ of $G$ acting properly discontinuously and cocompactly is called a \emph{compact Clifford--Klein form} of $X$, or (when it does not lead to any confusion) a \emph{compact quotient} of $X$.

 The study of compact reductive Clifford--Klein forms was initiated in the 80's by Kulkarni \cite{Kulkarni81} and Kobayashi \cite{Kobayashi89}. A lot of things remain to be understood, despite the significant works of Benoist \cite{Benoist96}, Kobayashi \cite{Kobayashi89,Kobayashi92,Kobayashi93,Kobayashi96,Kobayashi98}, Labourie \cite{BenoistLabourie92}, Mozes and Zimmer \cite{LMZ95}, Margulis \cite{Margulis97}, and more recently the works of Kassel \cite{Kassel08,Kassel10}, Gu\'eritaud \cite{GueritaudKassel}, Guichard and Wienhard \cite{GGKW}.\\

In this paper we will address the following two questions, to which no general answer is known:
 
\begin{ExistenceProb}
Which reductive homogeneous spaces admit compact Clifford--Klein forms?
\end{ExistenceProb}

\begin{VolumeProb}
Let $G/H$ be a reductive homogeneous space and $\Gamma$ a discrete subgroup of $G$ acting properly discontinuously and cocompactly on~$G/H$. Is the volume of $\Gamma \backslash G/H$ rational (up to a scaling constant independent of~$\Gamma$)?
\end{VolumeProb}

A particularly interesting family of homogeneous spaces are the \emph{pseudo-Riemannian homogeneous spaces of constant curvature}, a unified definition of which was given by Wolf in \cite{Wolf62}.
Recall that the pseudo-Riemannian homogeneous space of signature $(p,q)$ and constant negative curvature is the space
\[\H^{p,q} = \SO_0(p,q+1)/\SO_0(p,q)~.\] 
In this setting our results are summarized in the following:

\begin{MonThm}
Let $p$ and $q$ be positive integers. Then:
\begin{itemize}
\item If $p$ is odd, then $\H^{p,q}$ does not admit any compact Clifford--Klein form.
\item If $p$ is even, then the volume of any compact Clifford--Klein form of $\H^{p,q}$ is a rational multiple of the volume of the sphere of dimension $p+q$. 
\end{itemize}
\end{MonThm}

Prior to this work, the first point was only known when both $p$ and $q$ are odd \cite{Kulkarni81}, as well as when $p \leq q$ \cite{Wolf62}. The second point follows from the Chern--Gauss--Bonnet formula when $p+q$ is even but is new when $p$ is even and $q$ is odd.\\

Let us now give a more detailed overview of the results contained in this paper.

\subsection*{Volume of Compact Clifford--Klein forms}

It is well-known that the volume of a closed hyperbolic manifold of dimension $2n$ is essentially an integer, due to the Chern--Gauss--Bonnet formula. This argument generalizes to compact quotients of a reductive homogeneous space $G/H$ whenever one can show that the volume is a \emph{Chern--Weil class} associated to the canonical principal $H$-bundle over $G/H$ (see Section \ref{s:Rigidity}). If $G/H$ is a symmetric space, this is known to happen if and only if $G$ and $H$ have the same complex rank.

This argument has no chance to work for homogeneous spaces of odd dimension (because Chern--Weil classes have even degree), nor for homogeneous spaces of the form $H\! \times \! H/\Delta(H)$ (where $\Delta(H)$ denotes the diagonal embedding of $H$), for which all the Chern--Weil invariants are trivial. It is known for instance that the volume of a closed hyperbolic $3$-manifold is usually not rational.

In contrast, we proved in a recent paper (see \cite{Tholozan5}) that the volume of a closed \emph{anti-de Sitter} $3$-manifold (i.e. a compact quotient of $\H^{2,1}$) is a rational multiple of $\frac{\pi^2}{2}$, answering a question that was raised in \cite{QuestionsAdS}. The anti-de Sitter space $\H^{2,1}$ can be seen as the \emph{group space} $\SO_0(2,1)$ (i.e. the Lie group $\SO_0(2,1)$ with the action of $\SO_0(2,1) \times \SO_0(2,1)$ by left and right multiplication, see Definition~\ref{d:GroupSpace}). Its compact Clifford--Klein are known to exist and to have a rich deformation space (see \cite{Salein00}, \cite{KasselThese} or \cite{Tholozan3}). Kulkarni and Raymond proved in~\cite{KulkarniRaymond85} that these compact Clifford--Klein forms have the form
\[j\!\times\!\rho(\Gamma) \backslash \SO_0(2,1)~,\]
where $\Gamma$ is a cocompact lattice in $\SO_0(2,1)$, $j$ the inclusion and $\rho$ another representation of $\Gamma$ into $\SO_0(2,1)$. Moreover, Gu\'eritaud and Kassel proved in \cite{GueritaudKassel} that these quotients have the structure of a $\SO(2)$-bundle over $\Gamma \backslash \H^2$ (see Theorem \ref{t:FibrationGK}). In \cite{Tholozan5}, we proved the following formula:
\begin{equation} \label{eq:VolSO(n,1)} \Vol \left( j\!\times\!\rho(\Gamma) \backslash \SO_0(2,1)\right) = \frac{\pi^2}{2}\left(\euler(j) + \euler(\rho)\right)~,\end{equation}
where $\euler$ denotes the Euler class. This formula was later recovered by Alessandrini--Li \cite{AlessandriniLi15} and Labourie \cite{LabouriePrivate} using different methods.

It may seem surprising that a ``Chern--Weil-like'' invariant such as the Euler class appears when computing the volume of a $3$-manifold. The first aim of this paper is to explain better this phenomenon an generalize it to a much broader setting. 

The main issue is that we don't have a structure theorem similar to the one of Gu\'eritaud--Kassel in general (see Theorem \ref{t:FibrationGK} and the conjecture that follows). We will overcome this problem with the following argument: denoting $L$ a maximal compact subgroup of $H$ and $K$ a maximal compact subgroup of $G$ containing $L$, we see that $\Gamma\backslash G/H$ is homotopically equivalent to $\Gamma \backslash G/L$, which is a $K/L$-bundle over $\Gamma \backslash G/K$. Let $q$ be the dimension of $K/L$ and $p+q$ the dimension of $G/H$. A classical use of spectral sequences shows that $\Gamma$ has homological dimension $p$ and that $\HH_p(\Gamma, \Z)$ is generated by an element $[\Gamma]$ (Proposition \ref{p:HomDimGamma}). Since $G/K$ is contractible, $\HH_p(\Gamma, \Z)$ is naturally isomorphic to $\HH_p(\Gamma\backslash G/K, \Z)$ and $[\Gamma]$ can thus be realized as a singular $p$-cycle in $\Gamma\backslash G/K$. We will prove the following:

\begin{MonThm} \label{t:VolumeCliffordKlein}
Let $G/H$ be a reductive homogeneous space, with $G$ and $H$ connected and of finite center. Let $L$ be a maximal compact subgroup of $H$ and $K$ a maximal compact subgroup of $G$ containing $L$. Set $p = \dim G/H - \dim K/L$. Then there exists a $G$-invariant $p$-form $\omega_{G,H}$ on $G/K$ such that, for any torsion-free discrete subgroup $\Gamma \subset G$ acting properly discontinuously and cocompactly on $G/H$, we have
\[\Vol\left( \Gamma \backslash G/H\right) = \left| \int_{[\Gamma]} \omega_{G,H} \right|~.\]
\end{MonThm}

It turns out that, in many cases, the form $\omega_{G,H}$ is a ``Chern--Weil form'', though the volume form of $G/H$ is not (see Section \ref{s:InclusionSymSpaces}). This implies that the volume of any compact quotient of $G/H$ is a rational multiple of the volume of $G_U/H_U$, where $G_U$ and $H_U$ respectively denote the compact Lie groups dual to $G$ and $H$ (see Section \ref{s:Rigidity}). In particular, we will obtain the following:

\begin{MonThm} \label{t:RationalityVolume}
For the following pairs $(G,H)$, the volume of compact quotients of $G/H$ is a rational multiple of the volume of $G_U/H_U$:
\begin{itemize}
\item[$(1)$] $G = \SO(p,q+1)$, $H = \SO(p,q)$, $p$ even, $q>0$.
\item[$(2)$] $G = \SL(2n,\R)$, $H = \SL(2n-1,\R)$, $n > 0$.
\item[$(3)$] $G$ a Hermitian Lie group, $H$ any semi-simple subgroup.
\end{itemize}
\end{MonThm}

Cases $(1)$ and $(2)$ concern families of symmetric spaces that have attracted a lot of interest. However, they potentially carry no information. Indeed the symmetric space $\SL(2n,\R)/\SL(2n-1,\R)$ is conjectured not to admit any compact quotient (see next subsection), and the only known compact quotients of $\H^{p,q} = \SO(p,q+1)/\SO(p,q)$ for $p\geq 3$ are the so-called \emph{standard} quotients constructed by Kulkarni in \cite{Kulkarni81}, for which the theorem reduces to a classical statement about volumes of quotients of Riemannian symmetric spaces. Non standard quotients are only known in the case of $\H^{2,1}$, which was treated in \cite{Tholozan5} (see Equation \eqref{eq:VolSO(n,1)}) and \cite{AlessandriniLi15}.

Case $(3)$, on the other side, shows in particular that the volume of a compact quotient of the group space $\SU(d,1)$ is a rational multiple of the volume of $\SU(d+1)$. These compact quotients are known to exist and some of them have rich deformation spaces, as was proven by Kobayashi \cite{}, Kassel \cite{Kassel10}, and Gu\'eritaud--Guichard--Kassel--Wienhard \cite{GGKW}. Like quotients of $\SO_0(2,1)$, they are known to have (up to a finite cover) the form
\[j\!\times\!\rho(\Gamma) \backslash \SU(d,1)~,\]
where $\Gamma$ is a uniform lattice in $\SU(d,1)$, $j: \Gamma \to \SU(d,1)$ is the inclusion and $\rho:\Gamma \to \SU(d,1)$ is another representation (see Theorem \ref{t:QuotientsSU(d,1)} for a more precise statement). For such Clifford--Klein forms, we will actually give a more precise formula. Recall that $\SU(d,1)$ acts transitively on the complex hyperbolic space $\H^d_\C$ and preserves a K\"ahler form $\omega$. If $\Gamma$ is a uniform lattice in $\SU(d,1)$ and $\rho:\Gamma \to \SU(d,1)$ a representation, we define
\[\tau_k(\rho) = \int_{\Gamma \backslash \H^d_\C} \omega^{d-k} \wedge f^*\omega^k~,\]
where $f: \H^d_\C \to \H^d_\C$ is any smooth $\rho$-equivariant map.

\begin{MonThm} \label{t:VolumeQuotientsSU(d,1)Intro}
Let $\Gamma$ be a lattice in $\SU(d,1)$, $j:\Gamma \to \SU(d,1)$ the inclusion and $\rho: \Gamma \to \SU(d,1)$ another representation such that $j\! \times\! \rho(\Gamma)$ acts properly discontinuously and cocompactly on $\SU(d,1)$. Then
\[\Vol\left( j\!\times \! \rho(\Gamma) \backslash \SU(d+1)\right) = \Vol(\SU(d+1)) \sum_{k=0}^d \tau_k(\rho)~.\]
\end{MonThm}




\subsection*{A new obstruction to the existence of compact quotients}

Contrary to the Riemannian setting, compact pseudo-Riemannian Clifford--Klein forms do not always exist, and it is a long standing problem to characterize which reductive homogeneous spaces admit compact quotients. This question lead to many important works of Kulkarni \cite{Kulkarni81}, Kobayashi \cite{Kobayashi89,Kobayashi92,Kobayashi96}, Benoist \cite{Benoist96}, Labourie, \cite{BenoistLabourie92}, Mozes, Zimmer \cite{LMZ95,LabourieZimmer95}, Margulis \cite{Margulis97} or Shalom \cite{Shalom00}. We refer to \cite{KobayashiYoshino05} or \cite{Constantine12} for a more thorough survey. Let us recall here two famous conjectures that emerged from these works. 

\begin{KobaConj}
The homogeneous space $\H^{p,q} = \SO_0(p,q+1)/\SO_0(p,q)$ ($p,q>0$) admits a compact Clifford--Klein form if and only if one of the following holds:
\begin{itemize}
\item $p$ is even and $q=1$,
\item $p$ is a multiple of $4$ and $q=3$,
\item $p=8$ and $q=7$.
\end{itemize}
\end{KobaConj}

\begin{conj}[See for instance \cite{KasselThese}, Section 0.1.5]
The homogeneous space $\SL(n,\R)/\SL(m,\R)$ ($1<m<n$) never admits a compact Clifford--Klein form.
\end{conj}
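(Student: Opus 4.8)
The plan is to argue by contradiction, using Theorem~\ref{t:VolumeCliffordKlein} to turn a hypothetical compact quotient into a single cohomological pairing that I then try to force to vanish. Suppose $\Gamma \subset \SL(n,\R)$ acts properly discontinuously and cocompactly on $X = \SL(n,\R)/\SL(m,\R)$; passing to a finite-index subgroup (Selberg) and an index-two cover if needed, I may assume $\Gamma$ torsion-free and $\Gamma\backslash X$ orientable. Here $K = \SO(n)$, $L = \SO(m)$, the fiber $K/L$ is the Stiefel manifold $\SO(n)/\SO(m)$, and
\[
p = \dim X - \dim K/L = \binom{n+1}{2} - \binom{m+1}{2} = \frac{(n-m)(n+m+1)}{2}.
\]
By Theorem~\ref{t:VolumeCliffordKlein} and positivity of the pseudo-Riemannian volume, $\Vol(\Gamma\backslash X) = \left|\int_{[\Gamma]}\omega_{G,H}\right| > 0$, so it suffices to prove $\int_{[\Gamma]}\omega_{G,H} = 0$.

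The first step is to make $\omega_{G,H}$ explicit. Since $G/K = \SL(n,\R)/\SO(n)$ is a Riemannian symmetric space of the noncompact type, its $G$-invariant forms are all closed and are identified with $(\Lambda^p\mathfrak p^*)^K \cong H^p(\SU(n)/\SO(n);\R)$, the real cohomology of the compact dual. I would show that under this identification $\omega_{G,H}$ is the image of the ``dual volume class'' of $G_U/H_U = \SU(n)/\SU(m)$ under fiber integration along the compact fiber $\SO(n)/\SO(m)$ in the tower $\SU(n)/\SO(m) \to \SU(n)/\SO(n)$; concretely, $\omega_{G,H}$ is a Chern--Weil form in the Pontryagin and (when available) Euler classes of the tautological bundles attached to $\SO(m) \hookrightarrow \SO(n) \hookrightarrow \SU(n)$.

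For $m$ even this identification should already close the argument. When $m$ is even the Pfaffian of $\SO(m)$ supplies an Euler-class factor, and the fiber integration lands the resulting class in a cohomology group of $\SU(n)/\SO(n)$ that is forced to vanish: the rational cohomology of $\SU(n)/\SO(n)$ is an exterior algebra on generators in degrees $5, 9, 13, \dots$ up to roughly $2n-1$, together with an Euler generator in degree $n$ when $n$ is even, and the Euler factor contributed by even $m$ pushes the fiber-integrated class outside the span of the monomials realizable in degree $p$. Hence $\omega_{G,H}=0$ as an invariant form, so $\int_{[\Gamma]}\omega_{G,H}=0$, contradicting positivity of the volume and proving the conjecture for every even $m$.

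The hard part is $m$ odd, which is exactly where the conjecture is open. When $m$ is odd no Euler/Pfaffian factor is available, the fiber-integrated class $\omega_{G,H}$ need not vanish, and the volume method alone produces no contradiction. To push through I would try two complementary refinements. First, instead of showing $\omega_{G,H}=0$, show that its image in $H^p(\Gamma;\R)$ annihilates the generator $[\Gamma]$: since $\Gamma$ has homological dimension $p$ and $\HH_p(\Gamma,\Z)=\Z[\Gamma]$ (Proposition~\ref{p:HomDimGamma}), this is a single numerical pairing, which one could attempt to control through the comparison map $H^p_{\mathrm{cts}}(G;\R)\to H^p(\Gamma;\R)$ together with a Gysin/transgression analysis of the $\SO(n)/\SO(m)$-bundle. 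The decisive obstacle is that, lacking an analogue of the Gu\'eritaud--Kassel structure theorem (Theorem~\ref{t:FibrationGK}) for these $\Gamma$, we have no handle on $H^*(\Gamma;\R)$ nor on the position of $[\Gamma]$, so the pairing cannot be evaluated a priori. Second, I would feed the volume invariant into the properness obstructions of Kobayashi and Benoist and into cohomological-dimension constraints, hoping the partial information they give about $\Gamma$ pins down the pairing. Making any of this unconditional for odd $m$ is precisely the open problem, and I expect it to require genuinely new input beyond the volume method developed here.
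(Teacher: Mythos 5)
The statement you were asked about is a \emph{conjecture}, and the paper does not prove it: it only cites it (from Kassel's thesis) and establishes the case $m$ even, as case $(2)$ of Theorem \ref{t:AdvanceKobayashiConj} via Theorem \ref{t:VanishingForm1}. Your attempt is therefore honest but necessarily incomplete in the same way: you prove (modulo the point below) only the even-$m$ case and explicitly concede that odd $m$ is open. That concession is correct, and in fact the paper's own results show your odd-$m$ refinements cannot succeed by the volume method alone: for $(n,m)=(2k,2k-1)$ the rank equality $\rk(H_U)-\rk(L)=\rk(G_U)-\rk(K)$ holds, $\omega_{G,H}$ is a nonzero Chern--Weil form, and Theorem \ref{t:RationalityVolume}$(2)$ turns the pairing $\int_{[\Gamma]}\omega_{G,H}$ into a rationality statement rather than a vanishing one; any non-existence proof for odd $m$ must exploit information about $\Gamma$ beyond Proposition \ref{p:HomDimGamma}, exactly as you suspect.

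On the even-$m$ case, your route differs from the paper's and has one under-justified step. The paper's proof is elementary: using the explicit formula of Lemma \ref{l:ComputationOmegaGH}, it exhibits a diagonal matrix $\Omega \in K=\SO(n)$ (with $-1$ in positions $m$ and $m+1$) normalizing $L$, stabilizing $\k^\perp\cap\h^\perp$, and acting on it with determinant $-1$; the change of variables $U\mapsto U\Omega$ in the integral over $K/L$ forces $\omega_{G,H}=-\omega_{G,H}=0$, with no cohomology of $\SU(n)/\SO(n)$ needed. Your route instead goes through the compact dual, which is the machinery of Sections \ref{s:CompactDual}--\ref{s:InclusionSymSpaces}, but your key assertion --- that the Euler/Pfaffian factor ``pushes the fiber-integrated class outside the span of monomials realizable in degree $p$'' --- is stated, not proved. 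It can be made rigorous as follows: by Theorem \ref{t:PoincareDual}, $\omega_{G,H}^U$ vanishes iff $\iota_*[H_U/L]=0$ in $\HH_\bullet(\SU(n)/\SO(n),\Q)$; since $m$ is even, the top class of $H_U/L=\SU(m)/\SO(m)$ contains the degree-$m$ Euler factor lying in $\HH^\bullet_{even}(H_U/L,\Q)$, while $\HH^\bullet_{even}(\SU(n)/\SO(n),\Q)$ is trivial in positive degrees below $n$ (its only generator sits in degree $n>m$ when $n$ is even, and it is trivial when $n$ is odd); by the bi-grading preservation of Proposition \ref{p:PreserveBigrading} and the surjectivity criterion of Proposition \ref{p:InjectivityOdd}, $\iota^*$ cannot hit this Euler class, so $\iota_*[H_U/L]=0$ and $\omega_{G,H}=0$. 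With that repair your even-$m$ argument is a valid alternative to the paper's matrix-symmetry computation --- it buys a conceptual explanation of the vanishing, at the cost of invoking the Cartan--Borel structure theory, whereas the paper's $\Omega$-trick is self-contained. But as a proof of the stated conjecture, the attempt has a genuine gap at odd $m$, which is precisely the open problem.
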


In this paper, we obtain a powerful cohomological obstruction, allowing us to do significant advances toward these conjectures. In Section \ref{s:NonExistence}, we prove that in many cases the form $\omega_{G,H}$ of Theorem \ref{t:VolumeCliffordKlein} vanishes, directly implying that the reductive homogeneous space $G/H$ does not admit a compact Clifford--Klein form. In particular, we obtain the following:

\begin{MonThm}\label{t:AdvanceKobayashiConj}
For the following pairs $(G,H)$, the homogeneous space $G/H$ does not have any compact Clifford--Klein form.
\begin{itemize}
\item[$(1)$] $G= \SO_0(p,q+r)$, $H= \SO_0(p,q)$, $p,q,r>0$, $p$ odd;
\item[$(2)$] $G= \SL(n,\R)$, $H= \SL(m,\R)$, $1<m<n$, $m$ even;
\item[$(4)$] $G = \SL(p+q,\C)$, $H = \SU(p,q)$, $p,q>0$;
\item[$(5)$] $G = \Sp(2(p+q),\C)$, $H = \Sp(p,q)$;
\item[$(6)$] $G = \SO(2n,\C)$, $H = \SO^*(2n)$;
\item[$(7)$] $G = \SL(p+q,\R)$, $H = \SO_0(p,q)$, $p,q>1$;
\item[$(8)$] $G = \SL(p+q, \mathcal{H})$, $H = \Sp(p,q)$, $p,q>1$.
(Here $\mathcal{H}$ denotes the field of quaternions.)
\end{itemize}
\end{MonThm}

All of these cases are partly new. They were obtained independently by Morita in \cite{Morita16}. We give more details about how these results relate to earlier works in Section \ref{ss:EarlierResults} and to Yosuke Morita's work in Section \ref{ss:Morita}.

Finally, our obstruction will allow us to prove the following theorem, which was conjectured by Kobayashi (see \cite[Conjecture 4.15]{Kobayashi96}):

\begin{MonThm} \label{t:KobayashiRankConj}
Let $G$ be a connected semi-simple Lie group, $H$ a connected semi-simple subgroup of $G$, $L$ a maximal compact subgroup of $H$ and $K$ a maximal compact subgroup of $G$ containing $L$. If
\[\rk(G) - \rk(K) < \rk(H) - \rk(L)~\]
(where $\rk$ denotes the complex rank), then $G/H$ does not have a compact Clifford--Klein form.
\end{MonThm}
Note that Morita \cite{MoritaPreprint} independently proved that this theorem is implied by a previous result of his \cite{Morita15}.

\subsection*{Organization of the paper}

In Section \ref{s:HomologicalFibration}, we explain why compact reductive Clifford--Klein forms behave like fibrations over an Eilenberg--MacLane space ``at the homology level''. In Section \ref{s:FiberwiseIntegration} we construct the form $\omega_{G,H}$ as the contraction of a $(p+q)$-form on $G/L$ along the fibers $gK/L$ and we prove Theorem \ref{t:VolumeCliffordKlein}. In Section \ref{s:CompactDual}, we study the form corresponding to $\omega_{G,H}$ on the compact dual symmetric space $G_U/K$ and show that this form is ``Poincar\'e-dual'' to the inclusion of $H_U/L$ in $G_U/K$. In Section \ref{s:InclusionSymSpaces}, we derive a condition under which the form $\omega_{G,H}$ vanishes and a condition under which it is a ``Chern--Weil'' class. In Section \ref{s:Rigidity}, we explain why, when $\omega_{G,H}$ is a Chern--Weil class, the volume of compact Clifford--Klein forms is rational, concluding the proof of Theorem \ref{t:RationalityVolume}. In Section \ref{s:GroupSpaces}, we describe the form $\omega_{G,H}$ in the case of group spaces and deduce Theorem \ref{t:VolumeQuotientsSU(d,1)Intro}. In Section \ref{s:NonExistence} we give three different ways of proving the vanishing of the form $\omega_{G,H}$, leading to Theorems \ref{t:AdvanceKobayashiConj} and \ref{t:KobayashiRankConj}. Finally in Section \ref{s:LocalFibrations}, we prove that the vanishing of the form $\omega_{G,H}$ is also an obstruction to the existence of certain local foliations of $G/H$ by compact homogeneous subspaces, and we formulate a conjecture about the geometry of compact reductive Clifford--Klein forms.

\subsection*{Acknowledgements}
I am very thankful to Gabriele Mondello and Gregory Ginot for helping me understand spectral sequences, to Bertrand Deroin for suggesting the use of Thom's representation theorem in the proof of Theorem~\ref{t:VolumeCliffordKlein}, to Yosuke Morita for many insightful discussions about our respective works, to Toshiyuki Kobayashi for remarks on a previous version of this article, and to Yves Benoist for encouraging me to improve this previous version.




\section{Clifford--Klein forms are fibrations at the homology level} \label{s:HomologicalFibration}

In all this paper, $G$ will denote a connected Lie group and $H$ a closed connected subgroup of $G$. We will also fix $L$ a maximal compact subgroup of $H$ and $K$ a maximal compact subgroup of $G$ containing $L$. According to the Cartan--Iwasawa--Malcev theorem, $L$ and $K$ are well-defined up to conjugation. We denote respectively by $\g$, $\h$, $\k$ and $\l$ the Lie algebras of $G$, $H$, $K$ and $L$.

We will assume that the action of $G$ on the homogeneous space $X=G/H$ preserves a volume form.
Recall that this is equivalent to requiring that
\[\Det(G)_{|H} = \Det(H)~,\]
where $\Det(G)$ and $\Det(H)$ denote respectively the \emph{modular functions} of $G$ and $H$. Starting from Section \ref{s:CompactDual}, we will assume $G$ and $H$ to be reductive and therefore unimodular, in which case this condition is automatically satisfied.

A \emph{compact Clifford--Klein form} of $X$ is a quotient of $X$ by a discrete subgroup $\Gamma$ of $G$ acting properly discontinuously and cocompactly. The $G$-invariant volume form $\vol_X$ then descends to a volume form on $\Gamma \backslash X$ (that we still denote by $\vol_X$) and we can define the \emph{volume} of $\Gamma \backslash X$ by
\[\Vol\left( \Gamma \backslash X \right) = \left | \int_{\Gamma \backslash X} \vol_X \right|~. \\ \]

Recall that, since $K$ and $L$ are maximal compact subgroups of $G$ and $H$ respectively, the homogeneous spaces $G/K$ and $H/L$ are contractible. Let us fix a torsion-free discrete subgroup $\Gamma$ of $G$ acting properly discontinuously and cocompactly on $G/H$, and denote by $M$ the Clifford--Klein form
\[M= \Gamma \backslash G/H~.\]
We introduce two auxiliary Clifford--Klein forms: 
\[E = \Gamma \backslash G/L\] and
\[B = \Gamma \backslash G/K~.\]
($E$ and $B$ are smooth manifolds since $\Gamma$ is discrete and torsion-free.)

We remark the following facts:
\begin{itemize}
\item[$(i)$] $E$ fibers over $M$ with fibers isomorphic to $H/L$. Since $H/L$ is contractible, this fibration is a homotopy equivalence.
\item[$(ii)$] $E$ also fibers over $B$ with fibers isomorphic to $K/L$.
\item[$(iii)$] Since $G/K$ is contractible, $B$ is a \emph{classifying space} for $\Gamma$.
\end{itemize}

From the first point, we deduce in particular that the homology of $M$ is the same as the homology of $E$. The third point implies that the homology of $B$ is the homology of $\Gamma$. Finally, $(ii)$ implies that the homologies of $B$, $E$ and $K/L$ are linked (in an elaborate way) by the \emph{Leray--Serre spectral sequence}. We will use the following classical consequence:

\begin{prop}[See \cite{Kulkarni81} and \cite{Kobayashi89}] \label{p:HomDimGamma}
Let $q$ denote the dimension of $K/L$ and $p+q$ the dimension of $G/H$. Then the group $\Gamma$ has homological dimension $p$ and
\[\HH_p(\Gamma, \Z) \simeq \HH_{p+q}(M,\Z) \simeq \Z~.\]
\end{prop}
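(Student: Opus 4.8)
The plan is to exploit the two fibrations recorded above. From $(i)$ and $(iii)$ we have $\HH_*(M;\Z)\cong\HH_*(E;\Z)$ and $\HH_*(B;\Z)\cong\HH_*(\Gamma;\Z)$, so the whole statement reduces to understanding the Leray--Serre spectral sequence of the fibration $K/L\to E\to B$ from $(ii)$. First I would record the geometric inputs. The fibre $F=K/L$ is a compact homogeneous space of the connected compact group $K$, hence a closed orientable manifold of dimension $q$; thus $\HH_t(F;\Z)=0$ for $t>q$ and $\HH_0(F;\Z)\cong\HH_q(F;\Z)\cong\Z$. The total space $E$ is homotopy equivalent to $M=\Gamma\backslash G/H$, a closed manifold of dimension $p+q$; it is orientable because the $G$-invariant volume form on $G/H$ is $\Gamma$-invariant and descends to $M$. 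Hence $\HH_{p+q}(E;\Z)\cong\Z$ and $\HH_i(E;\Z)=0$ for $i>p+q$. Finally, since $G$ is connected, each $\gamma\in\Gamma$ acts on $G/L$ by a diffeomorphism isotopic to the identity through the $G$-action, so the monodromy on the homology of the fibre is trivial and all coefficient systems below are constant.

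Next I would run the cohomology spectral sequence $E_2^{s,t}\cong\HH^s(B;\mathbb{F})\otimes\HH^t(F;\mathbb{F})\Rightarrow\HH^{s+t}(E;\mathbb{F})$ over an arbitrary field $\mathbb{F}$ to locate the top nonzero cohomology of $B$. Let $b=\max\{s:\HH^s(B;\mathbb{F})\neq 0\}$, which is finite since $B=\Gamma\backslash G/K$ is a finite-dimensional manifold. The corner $(b,q)$ survives every differential: any incoming differential originates in a row $t>q$ and any outgoing one lands in a column $s>b$, all of which vanish. Hence $E_\infty^{b,q}\cong E_2^{b,q}\cong\HH^b(B;\mathbb{F})\neq 0$, and being a subquotient of $\HH^{b+q}(E;\mathbb{F})$ it forces $b+q\le p+q$, i.e. $b\le p$. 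Conversely $\HH^{p+q}(E;\mathbb{F})\cong\mathbb{F}$ is assembled from terms $E_\infty^{s,t}$ with $s+t=p+q$ and $t\le q$, hence $s\ge p$; since also $s\le b$, we get $b\ge p$. Therefore $b=p$: for every field $\mathbb{F}$ one has $\HH^s(\Gamma;\mathbb{F})=0$ for $s>p$ and $\HH^p(\Gamma;\mathbb{F})\neq 0$.

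It remains to upgrade this to integral coefficients and to pin down the top group. By the universal coefficient theorem a nonzero $\HH_s(\Gamma;\Z)$ in a degree $s>p$ would be seen either over $\Q$ (if it had positive rank) or over some $\mathbb{F}_\ell$ (if it were torsion), contradicting the previous paragraph; hence $\HH_s(\Gamma;\Z)=0$ for $s>p$ while $\HH_p(\Gamma;\Q)\neq 0$, so $\Gamma$ has homological dimension exactly $p$. Feeding this back into the integral homology spectral sequence $E^2_{s,t}\cong\HH_s(B;\Z)\otimes\HH_t(F;\Z)\Rightarrow\HH_{s+t}(E;\Z)$, we now have $\HH_s(B;\Z)=0$ for $s>p$ and $\HH_t(F;\Z)=0$ for $t>q$, so the only term in total degree $p+q$ is $E^2_{p,q}\cong\HH_p(\Gamma;\Z)$, which again survives all differentials. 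Thus $\Z\cong\HH_{p+q}(M;\Z)\cong\HH_{p+q}(E;\Z)\cong\HH_p(\Gamma;\Z)$, as claimed. The main obstacle I anticipate is not conceptual but bookkeeping: verifying that the extreme corners genuinely survive and account for the entire top-degree (co)homology, and executing the field-to-$\Z$ refinement carefully; the two structural ingredients one must not skip are the orientability of $M$ and the triviality of the monodromy coming from connectedness of $G$.
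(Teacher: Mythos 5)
Your argument is correct and is essentially the paper's proof: both reduce everything to the Leray--Serre spectral sequence of $K/L \to E \to B$, together with the facts that $E \simeq M$ is a closed oriented $(p+q)$-manifold, that $K/L$ is a closed oriented $q$-manifold, and that the monodromy on $\HH_q(K/L,\Z)$ is trivial, and then read off $\Z \simeq \HH_{p+q}(M,\Z) \simeq \HH_p(\Gamma,\Z)$ from the surviving corner term; your field-coefficient corner argument plus the universal-coefficient upgrade simply proves in detail the ``classical consequence'' of Serre's theorem that the paper cites as known. One justification should be repaired: orientability of $K/L$ does not follow merely from $K$ being compact and connected (the projective plane $\SO(3)/\mathrm{O}(2)$ is a homogeneous space of a connected compact group and is non-orientable); it holds here because $L$ is connected --- the same fact the paper uses to see that $\Gamma$ preserves the orientation of the fibers --- whereas your alternative derivation of monodromy triviality from connectedness of $G$ is correct as stated.
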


\begin{proof}
Let $p'$, $q'$ and $r'$ denote respectively the homological dimensions of $B$, $K/L$ and $E$. By Serre's theorem, the spectral sequence given by 
\[\E_{k,l}^2 = \HH_k \left(B, \HH_l(K/L,\Z)\right)\]
converges to $\HH_{k+l}(E,\Z)$. A classical consequence is that
\[r' = p' + q'\]
and that 
\begin{equation} \label{eq:SpectralSequence}
\HH_{p'+q'}(E,\Z) \simeq \HH_{p'}\left(B, \HH_{q'}(K/L, \Z) \right)~.
\end{equation}
Since $K/L$ is a closed oriented manifold of dimension $q$, we have $q'=q$ and $\HH_q(K/L,\Z) \simeq \Z$. Since $E$ is homotopy equivalent to $M$ which is a closed oriented manifold of dimension $p+q$, we also have $r'= p+q$. Therefore $p'= p$.

Moreover, since $L$ is connected, the action of $\Gamma$ on $G/L$ preserves an orientation of the fibers of the fibration 
\[G/L \to G/K\]
and $\Gamma$ thus acts trivially on $\HH_q(K/L, \Z)$. From \eqref{eq:SpectralSequence}, we obtain
\[\Z \simeq \HH_{p+q}(E,\Z) \simeq \HH_p(B,\Z)~.\]
The proposition follows since $E$ is homotopy equivalent to $M$ and $B$ is a classifying space for $\Gamma$.
\end{proof}

To go further, we need to explicitly describe the isomorphism $\HH_{p+q}(E,\Z) \simeq \HH_p(B,\Z)$. Let $[\Gamma]$ denote a generator of $\HH_p(B,\Z) \simeq \HH_p(\Gamma,\Z)$, and $\pi$ the fibration of $E$ over $B$. Roughly speaking, if one thinks of $[\Gamma]$ as a closed submanifold of $B$ of dimension $p$, then the isomorphism $\HH_p(B,\Z) \overset{~}{\to} \HH_{p+q}(E,\Z)$ maps $[\Gamma]$ to $\pi^{-1}([\Gamma])$, which is a submanifold of $E$ of dimension $p+q$.

However, we don't know whether $[\Gamma]$ can be represented by a submanifold. One way to overcome this difficulty would be to work with simplicial complexes. However, since we will use differential geometry later, it is more convenient to use Thom's realization theorem:

\begin{CiteThm}[Thom, \cite{Thom54}]
There exists a closed oriented $p$-manifold $B'$, a smooth map $\phi:B' \to B$ and an integer $k$ such that 
\[ k [\Gamma] = \phi_*[B']~,\]
where $[B']$ denotes the fundamental class of $B'$.
\end{CiteThm}

Let $\pi': E'\to B'$ be the pull-back of the fibration $\pi:E \to B$ by $\phi$ and $\hat{\phi}: E' \to E$ the lift of $\phi$. The total space of the fibration $E'$ is a closed orientable $(p+q)$-manifold.

\begin{prop}
Let $[E]$ denote a generator of $\HH_{p+q}(E)$ and $[E']$ denote the fundamental class of $E'$. Then, up to switching the orientation of $E'$, we have
\[k [E] = \hat{\phi}_*[E']~.\]
\end{prop}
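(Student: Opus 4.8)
The plan is to deduce the relation by functoriality of the homology Leray--Serre spectral sequence applied to the morphism of fibrations furnished by the pull-back square
\[
\begin{array}{ccc}
E' & \xrightarrow{\ \hat{\phi}\ } & E \\
\downarrow\pi' & & \downarrow\pi \\
B' & \xrightarrow{\ \phi\ } & B
\end{array}
\]
By construction of the pull-back, $\hat{\phi}$ sends the fiber $\pi'^{-1}(b')$ diffeomorphically onto the fiber $\pi^{-1}(\phi(b'))$ for every $b'\in B'$, both being copies of $K/L$. Hence, on the second pages of the two spectral sequences, $\hat{\phi}$ induces the map
\[
\phi_* \otimes \mathrm{id} \colon \HH_k\!\left(B', \HH_l(K/L,\Z)\right) \longrightarrow \HH_k\!\left(B, \HH_l(K/L,\Z)\right)~,
\]
the coefficients being untwisted, as in the proof of Proposition~\ref{p:HomDimGamma}, since $L$ is connected.

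First I would record that for both fibrations the corner $(p,q)$ is the only contribution in total degree $p+q$: as in the proof of Proposition~\ref{p:HomDimGamma}, the base has homological dimension $p$ and the fiber dimension $q$, so a term $\E^2_{s,t}$ can be nonzero only for $s\leq p$ and $t\leq q$, forcing $s=p$, $t=q$ in total degree $p+q$. All differentials into or out of $\E^2_{p,q}$ therefore vanish, and the edge homomorphism gives canonical isomorphisms
\[
\HH_{p+q}(E) \simeq \HH_p(B) \otimes \HH_q(K/L), \qquad \HH_{p+q}(E') \simeq \HH_p(B') \otimes \HH_q(K/L)~,
\]
exactly of the form \eqref{eq:SpectralSequence}. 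Naturality of the spectral sequence then identifies $\hat{\phi}_*\colon \HH_{p+q}(E')\to \HH_{p+q}(E)$ with $\phi_*\otimes\mathrm{id}$ under these isomorphisms.

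The key point is to identify the image of the fundamental class under the left-hand edge isomorphism. Since $E'$ is a closed oriented $(p+q)$-manifold fibering over the closed oriented $p$-manifold $B'$ with oriented fiber $K/L$, its fundamental class maps to the product of the fundamental classes,
\[
\HH_{p+q}(E') \ni [E'] \longmapsto \pm\, [B'] \otimes [K/L]~,
\]
the sign reflecting the chosen orientation of $E'$; replacing that orientation if necessary, we may take the sign to be $+$. We likewise normalize the generator $[E]$ of $\HH_{p+q}(E)\simeq\Z$ so that it corresponds to $[\Gamma]\otimes[K/L]$, where $[\Gamma]$ generates $\HH_p(B)$. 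Chasing the diagram and using $\phi_*[B'] = k[\Gamma]$ then yields
\[
\hat{\phi}_*[E'] \ \longleftrightarrow\ (\phi_* \otimes \mathrm{id})\big([B'] \otimes [K/L]\big) = k[\Gamma] \otimes [K/L] \ \longleftrightarrow\ k[E]~,
\]
whence $\hat{\phi}_*[E'] = k[E]$.

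I expect the main obstacle to be the justification that the edge homomorphism sends the fundamental class of the total space to the product $[B']\otimes[K/L]$, together with the careful bookkeeping of orientations (of the fibers, of $B'$, and of $E'$) needed to control the overall sign. This is a standard fact for compact oriented fibrations --- provable by reduction to a local product via the Thom isomorphism, or by observing that the product orientation on $E'$ is characterized precisely by this top-filtration component --- but it is the only step that genuinely uses the geometry of the total space rather than pure naturality.
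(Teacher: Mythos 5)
Your proof is correct and follows essentially the same route as the paper: naturality of the homology Leray--Serre spectral sequence for the pull-back square, the corner isomorphism in total degree $p+q$, and a diagram chase using $\phi_*[B'] = k[\Gamma]$. The only real difference is that the ``main obstacle'' you flag is a non-issue: since $\HH_{p+q}(E')\simeq \Z$ is generated by $[E']$ and $\HH_p(B')\otimes \HH_q(K/L,\Z)\simeq \Z$ is generated by $[B']\otimes [K/L]$, any isomorphism between these groups carries $[E']$ to $\pm\,[B']\otimes[K/L]$, and the sign is absorbed by the allowed reorientation of $E'$ --- which is exactly how the paper argues, with no need for the Thom isomorphism or any explicit compatibility of product orientations.
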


\begin{proof}
The Leray--Serre spectral sequence shows that the fibrations $\pi$ and $\pi'$ respectively induce isomorphisms
\[\pi^*: \HH_p(B) \to \HH_{p+q}(E)\]
and 
\[{\pi'}^*: \HH_p(B') \to \HH_{p+q}(E')~.\]
By naturality of the Serre spectral sequence, we have the following commuting diagram:
\[
\xymatrix{
\HH_p(B') \ar[d]_{{\pi'}^*} \ar[r]^{\phi_*} & \HH_p(B) \ar[d]^{\pi^*} \\
\HH_{p+q}(E') \ar[r]^{\hat{\phi}_*} & \HH_{p+q}(E)~.
}
\]
Now, $B'$ and $E'$ are closed oriented manifolds of dimension $p$ and $p+q$ respectively. Since ${\pi'}^*$ is an isomorphism, it maps the fundamental class of $B'$ to the fundamental class of $E'$ (up to switching the orientation of $E'$). Since $\phi_*[B'] = k [\Gamma]$, we thus have
\[\hat{\phi}_*[E'] = k [E]~.\]
\end{proof}

To summarize, we proved that the rational homology of $E$ in dimension $p+q$ is generated by a cycle that ``fibers'' over a $p$-cycle of $B$.

\section{Fiberwise integration of the volume form} \label{s:FiberwiseIntegration}

Let $E'$, $B'$, $\phi$, $\hat{\phi}$ and $\pi$, $\pi'$ be as in the previous section. Denote by $\psi$ the projection from $E$ to $M$. Recall that the volume form $\vol_X$ on $X=G/H$ induces a volume form on $M$ that we still denote by $\vol_X$.

Since $\psi$ is a homotopy equivalence, we have
\[\Vol(M) = \left| \int_M \vol_X \right| = \left| \int_{[E]} \psi^*\vol_X \right|~.\]
Since $k[E] = \hat{\phi}_*[E']$, we have
\[\left| \int_{[E]} \psi^*\vol_X \right| = \frac{1}{k}\left| \int_{E'} \hat{\phi}^*\psi^*\vol_X \right|~.\]

Now, since $E'$ fibers over $B'$, we can ``average'' the form $\hat{\phi}^*\psi^*\vol_X$ along the fibers to obtain a $p$-form on $B'$ whose integral will give the volume of~$M$. Let $x$ be a point in $G/K$ and let $F$ denote the fiber $\pi^{-1}(x)$. Choose some volume form $\vol_F$ on $F$ and let $\xi$ denote the section of $\Lambda^q TF$ such that $\vol_F(\xi)=1$. At every point $y$ of $F$, the $p$-form obtained by contracting $\psi^*\vol_X$ with $\xi$ has $T_y F$ in its kernel and therefore induces a $p$-form $\omega_y$ on $T_x G/K$. 

\begin{defi} \label{d:OmegaH} The form $\omega_{G,H}$ on $G/K$ is defined at the point $x$ by
\[(\omega_{G,H})_x = \int_F \omega_y\ \d \vol_F(y)~.\]
\end{defi}
One easily checks that this definition does not depend on the choice of $\vol_F$. Since the maps $\psi$ and $\pi$ are equivariant with respect to the actions of~$G$, the volume forms $\psi^* \vol_X$ and $\omega_{G,H}$ are $G$-invariant. By a slight abuse of notation, we still denote by $\omega_{G,H}$ the induced $p$-form on $B = \Gamma \backslash G/K$.

\begin{prop} \label{p:FiberIntegration}
For any submanifold $V$ of dimension $p$ in $G/K$, we have 
\begin{equation} \int_V \omega_{G,H} = \int_{\pi^{-1}(V)} \psi^*\vol_X~. \end{equation} 
\end{prop}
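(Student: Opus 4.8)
The plan is to recognize the right-hand side as integration along the fibers of $\pi$ and then to invoke the Fubini-type (projection) formula for fiber integration. The fibration $\pi : G/L \to G/K$ has compact fibers $F \cong K/L$ of dimension $q$, oriented by the choice of $\vol_F$ (equivalently, by $\xi$). Integration along these fibers is a linear operation $\pi_*$ sending $(p+q)$-forms on $G/L$ to $p$-forms on $G/K$, and a direct comparison with Definition \ref{d:OmegaH} shows that $\omega_{G,H} = \pi_*(\psi^*\vol_X)$: contracting $\psi^*\vol_X$ with the unit fiber multivector $\xi$ extracts exactly the component of $\psi^*\vol_X$ of maximal vertical degree, and integrating it over $F$ against $\vol_F$ computes precisely the coefficient produced by $\pi_*$.

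Granting this identification, the proposition is the assertion $\int_V \pi_* \alpha = \int_{\pi^{-1}(V)} \alpha$ for $\alpha = \psi^*\vol_X$, where $\pi^{-1}(V)$ carries the orientation induced by that of $V$ together with the fiber orientation. I would prove this by localizing. Cover $V$ by open sets $U_i \subset G/K$ over which $\pi$ is trivialized by orientation-compatible diffeomorphisms $\pi^{-1}(U_i) \cong U_i \times F$, and take a partition of unity $(\chi_i)$ on a neighborhood of $V$ subordinate to $\{U_i\}$. Since $\pi_*(\pi^*\chi_i \cdot \alpha) = \chi_i\,\pi_*\alpha$ and both sides of the claimed identity are linear in $\alpha$, it suffices to prove the equality for each $\pi^*\chi_i \cdot \alpha$ separately, i.e. to treat the case where the integrand is supported, in the base direction, inside a single trivializing chart.

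In such a chart, choose local coordinates $s = (s_1,\dots,s_p)$ parametrizing $V \cap U_i$ and let $t$ denote the coordinate on the fiber $F$, so that $\pi^{-1}(V \cap U_i) \cong (V \cap U_i) \times F$. The only component of $\alpha$ that contributes to integration over this $(p+q)$-manifold, and equally the only component surviving the contraction with $\xi$, is $c(s,t)\,\vol_F \wedge \d s_1 \wedge \cdots \wedge \d s_p$. Thus $\omega_y = c(s,t)\,\d s_1\wedge\cdots\wedge\d s_p$ and, by Definition \ref{d:OmegaH}, $\omega_{G,H}|_V = \big(\int_F c(s,t)\,\d\vol_F(t)\big)\,\d s_1\wedge\cdots\wedge\d s_p$. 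Integrating this over $V \cap U_i$ and invoking Fubini's theorem rearranges the resulting iterated integral into $\int_{(V\cap U_i)\times F} c(s,t)\,\d\vol_F(t)\,\d s = \int_{\pi^{-1}(V\cap U_i)}\alpha$. Summing over $i$ and recombining via the partition of unity gives the proposition.

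The only genuinely delicate point is the bookkeeping of orientations: one must fix the orientation of $\pi^{-1}(V)$ to be the local product of the orientation of $V$ with the fiber orientation determined by $\vol_F$, and verify that this is exactly the convention under which the contraction of Definition \ref{d:OmegaH} and the Fubini rearrangement produce the same sign. Because $L$ is connected, the fibers $K/L$ carry a $G$-invariant orientation, so this choice is globally coherent and independent of the trivializations $U_i$; the partition-of-unity patching is then unproblematic, and everything else reduces to the elementary Fubini computation above.
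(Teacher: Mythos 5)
Your proof is correct and follows essentially the same route as the paper's: identify $\omega_{G,H}$ as the fiber integration $\pi_*(\psi^*\vol_X)$, localize over charts trivializing $\pi$ where the restricted form reads $f(y,x)\,\vol_F\wedge\vol_U$, and conclude by Fubini. The only difference is that you spell out the partition-of-unity patching and the orientation conventions (coherent fiber orientation from the connectedness of $L$), which the paper's terse proof leaves implicit.
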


\begin{proof}
This is presumably a classical result of differential geometry. Let $U$ be an open subset of $V$ over which the fibration $\pi$ is trivial. Let us identify $\pi^{-1}(U)$ with $K/L \times U$. We can locally write the form $\psi^*\vol_X$ as $f(y,x) \vol_F \wedge \vol_U$ for some function $f$ on $K/L\times U$ and some volume forms $\vol_F$ and $\vol_U$ on $K/L$ and $U$ respectively. Let $\xi$ be the section of $\Lambda^q T K/L$ such that $\vol_F(\xi) = 1$. The contraction of $\psi^*\vol_X$ with $\xi$ is thus $f(y,x) \vol_U$. By construction, we thus have
\[(\omega_{G,H})_x = \left(\int_F f(x,y) \d \vol_F(y)\right) \vol_U~,\]
and therefore
\begin{eqnarray*}
\int_{\pi^{-1}(U)} \psi^*\vol_X & = & \int_{F\times U} f(y,x) \d \vol_F(y) \d \vol_U(x) \\
\ & = & \int_U \omega_{G,H}~.
\end{eqnarray*}
\end{proof}

In particular, if $V$ is a sphere of dimension $p$ in $G/K$ that can be homotoped to a point $p$, then $\pi^{-1}(V)$ can be homotoped to the fiber $\pi^{-1}(p)$. We thus have
\[\int_V \omega_{G,H} = \int_{\pi^{-1}(V)} \psi^*\vol_X = 0~.\]
Since $\psi^*\vol_X$ is closed. This shows that $\omega_{G,H}$ is closed.

\begin{rmk}
In the following, we will assume that $G$ is semi-simple, in which case any $G$-invariant form on $G/K$ is closed, according to a well-known theorem of Cartan.
\end{rmk}

We can now conclude the proof of Theorem \ref{t:VolumeCliffordKlein}. Indeed, we have
\begin{eqnarray*}
\Vol(M) & = & \frac{1}{k} \left| \int_{E'} \hat{\phi}^* \psi^* \vol_X \right|\\
\ & = & \frac{1}{k} \left| \int_{B'} \phi^* \omega_{G,H} \right| \quad \textrm{by Proposition \eqref{p:FiberIntegration}}\\
\ & = & \left| \int_{[\Gamma]} \omega_{G,H} \right|~.\\
\end{eqnarray*}

Let us conclude this section by giving a more explicit way to compute the form $\omega_{G,H}$ when $G$ is a connected semi-simple Lie group with finite center. Recall that in that case, the tangent space of $G/K$ at the point $x_0 = K$ can be identified with the orthogonal of $\h$ in $\g$ with respect to the Killing form of $\g$. Moreover, the form $\omega_{G,H}$ is uniquely determined by its restriction to $T_{x_0}G/K$.

If $\frak{v}$ is a subspace of $\g$ of dimension $d$ in restriction to which the Killing form $\Kill_G$ is non degenerate, we denote by $\omega_{\frak{v}}$ the $d$-form on $\g$ given by composing the orthogonal projection on $\frak{v}$ with the volume form on $\frak{v}$ induced by the restriction of the Killing form.

Finally, let us provide $K/L$ with the left invariant volume form $\omega_{K/L}$ induced by the restriction of the metric on $G/H$.
\begin{lem} \label{l:ComputationOmegaGH}
The form $\omega_{G,H}$ at the point $x_0$ is given by
\[(\omega_{G,H})_{x_0} = \int_{K/L} \Ad_u^* \omega_{\k^\perp \cap \h^\perp}\ \d \omega_{K/L}(u)~.\]
\end{lem}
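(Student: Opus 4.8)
The plan is to unwind Definition~\ref{d:OmegaH} at the basepoint $x_0 = eK$ (which suffices, since $\omega_{G,H}$ is $G$-invariant and hence determined by its value there) by transporting everything to the identity coset with the $G$-action, and to recognize the fiberwise contraction as a single, explicitly identified volume form. Throughout I would identify $T_{x_0}(G/K)$ with $\k^\perp$ and $T_{eH}(G/H)$ with $\h^\perp$ via the Killing form, so that $\vol_X$ at $eH$ becomes the volume form $\vol_{\h^\perp}$ of $\h^\perp$ and $\psi^*\vol_X$ is its pullback. The fiber $F=\pi^{-1}(x_0)$ is the image of $K$ in $G/L$, so $F\cong K/L$, and its vertical tangent space at each coset is identified with $\k\cap\l^\perp = \k\ominus\l$.

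The structural heart of the argument is the \emph{orthogonal} decomposition
\[\h^\perp \;=\; (\k\cap\l^\perp)\ \oplus\ (\k^\perp\cap\h^\perp)~.\]
I would deduce this from the compatible Cartan decomposition $\h=\l\oplus(\h\cap\k^\perp)$ together with $\h\cap\k=\l$: the first summand lies in $\k$ and the second in $\k^\perp$, so they are mutually Killing-orthogonal; both are orthogonal to $\h=\l\oplus(\h\cap\k^\perp)$; and a dimension count ($q$ and $p$ respectively, summing to $\dim\h^\perp=p+q$) shows they fill out $\h^\perp$. This exhibits the vertical directions of $\pi$ as the first summand and $\k^\perp\cap\h^\perp$ as their Killing-orthogonal complement \emph{inside} $\h^\perp$.

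Next I would compute $\omega_y$ at $y=uL$ with $u\in K$. Writing a tangent vector at $uL$ as $v_X=\tfrac{d}{dt}\big|_0 u\exp(tX)L$, the identity $u\exp(tX)K=\exp(t\Ad_uX)K$ gives $\d\pi(v_X)=\Ad_uX$ for $X\in\k^\perp$, so the horizontal lift of $v\in\k^\perp$ is $v_{\Ad_{u^{-1}}v}$; meanwhile left-invariance of $\vol_X$ yields $(\psi^*\vol_X)_{uL}(v_{Z_1},\dots)=\vol_{\h^\perp}(\mathrm{pr}_{\h^\perp}Z_1,\dots)$. Contracting with a vertical $q$-vector normalized by $\omega_{K/L}$ and invoking the orthogonal splitting above, the first summand factors out and one is left, up to sign, with $\vol_{\k^\perp\cap\h^\perp}$ evaluated on $\Ad_{u^{-1}}v_1,\dots,\Ad_{u^{-1}}v_p$; that is, $\omega_{uL}=\pm\,\Ad_{u^{-1}}^*\omega_{\k^\perp\cap\h^\perp}$ restricted to $\k^\perp$. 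Integrating over $F$ against $\omega_{K/L}$ then produces $\int_{K/L}\Ad_{u^{-1}}^*\omega_{\k^\perp\cap\h^\perp}\,\d\omega_{K/L}(u)$, the normalization from $\omega_{K/L}$ being exactly what cancels the volume factor $\vol_{\k\cap\l^\perp}$ generated by the contraction.

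The one genuine subtlety, and the step I expect to be the main obstacle, is that this direct computation yields $\Ad_{u^{-1}}^*$ rather than the $\Ad_u^*$ of the statement. I would resolve this by lifting the integral from $K/L$ to $K$: the integrand $u\mapsto\Ad_{u^{-1}}^*\omega_{\k^\perp\cap\h^\perp}$ is right-$L$-invariant (as $L$ is connected and $\Ad_L$ preserves both $\k^\perp\cap\h^\perp$ and its volume form), so it descends from $K$, and the Haar measure of the compact group $K$ is invariant under $u\mapsto u^{-1}$, giving $\int_K\Ad_{u^{-1}}^*\omega\,\d u=\int_K\Ad_u^*\omega\,\d u$. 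This shows the two integrals over $K/L$ coincide and delivers the stated formula. The remaining work is purely bookkeeping of the identifications and of the sign ambiguities, which are harmless in view of the absolute value in Theorem~\ref{t:VolumeCliffordKlein}.
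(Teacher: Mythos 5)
Your proposal is correct and follows essentially the same route as the paper's proof: unwind Definition~\ref{d:OmegaH} at $x_0$, use the Killing-orthogonal splitting $\h^\perp = (\k\cap\h^\perp)\oplus(\k^\perp\cap\h^\perp)$ (your $\k\cap\l^\perp$ equals $\k\cap\h^\perp$) so that contraction with the unit vertical $q$-vector yields $\omega_{\k^\perp\cap\h^\perp}$, and compute $\d\pi$ at $u\cdot y_0$ from $u\exp(tv) = \exp(t\Ad_u(v))\,u$. The "subtlety" you flag is genuine but harmless: the paper's own proof likewise concludes with ${\Ad_u}_*\omega_{\k^\perp\cap\h^\perp} = \Ad_{u^{-1}}^*\omega_{\k^\perp\cap\h^\perp}$ and silently identifies this with the $\Ad_u^*$ of the statement, which your Haar-inversion argument on $K$ justifies rigorously.
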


\begin{proof}
In the construction of $\omega_{G,H}$ (Definition \ref{d:OmegaH}), we choose $\omega_{K/L}$ as our volume form on $F_{x_0} = K/L$. Let $\xi$ be the $q$-vector on $\omega_{K/L}$ such that $\omega_{K/L}(\xi) = 1$.

At $y_0 = L$, the pull-back of $\vol_X$ by the projection $\psi : G/L \to G/H$ identifies with the form $\omega_{\h^\perp}$ on $\g$. Since the $q$-vector $\xi$ at $y_0$ is given by $e_1\wedge \ldots \wedge e_q$, where $(e_1,\ldots , e_q)$ is an orthonormal frame of $\k \cap \h^\perp$, we have
\[(i_\xi\omega_{\h^\perp})_{y_0} = \omega_{\k^\perp \cap \h^\perp}~.\]
By left invariance, we also have
\[(i_\xi\psi^*\vol_X)_{u\cdot y_0} = u_*\omega_{\k^\perp \cap \h^\perp}~.\]

Now, identifying $T_{u\cdot y_0}G/L$ with $u_* \l^\perp$, the differential of $\pi: G/L \to G/K$ is given at $u\cdot y_0$ by
\begin{eqnarray*}
\d \pi_{u\cdot y_0}(u_* v) & = & \dt \pi(u \exp(t v) \cdot y_0) \\
 & = & \dt \pi \left( \exp(t \Ad_u(v)) u \cdot y_0 \right) \\
 & = & \dt \exp(t \Ad_u(v)) \cdot \pi(u \cdot y_0)\\
 & = & \dt \exp(t \Ad_u(v)) \cdot x_0\\
 & = & p_{\k^\perp}\Ad_u(v)~,
\end{eqnarray*}
where $p_{\k^\perp}$ denotes the orthogonal projection on $\k^\perp$.

Therefore, the form $(i_\xi\psi^*\vol_X)$ at $u\cdot y_0$, whose kernel contains $u_* \k$, induces by projection the form ${\Ad_u}_* \omega_{\k^\perp \cap \h^\perp}$ at $x_0$. By construction of the form $\omega_{G,H}$, we thus obtain
\[(\omega_{G,H})_{x_0} = \int_{K/L} {\Ad_u}_* \omega_{\k^\perp \cap \h^\perp}\ \d \omega_{K/L}(u)~.\]
\end{proof}

\section{The corresponding form on the compact dual} \label{s:CompactDual}

From now on, we assume that $G$ is a connected semi-simple Lie group with finite center and that $H$ is a reductive subgroup. In this section we investigate the form $\omega_{G,H}^U$ corresponding to $\omega_{G,H}$ on the \emph{compact dual} of $G/K$.\\

Write
\[\g = \k \oplus \p~,\]
where $\p$ is the orthogonal of $\k$ with respect to the Killing form.
Then $\k \oplus i \p$ is a Lie subalgebra of the complexification $\g^\C$ of $\g$, generating a compact Lie group $G_U$ containing $K$, called the \emph{compact dual} of $G$. The compact symmetric space $G_U/K$ is the \emph{compact dual} of the symmetric space $G/K$.

By construction, the tangent spaces at the base point $x_0 = K$ in $G/K$ and $G_U/K$ are isomorphic as representations of $K$. This induces an isomorphism between the exterior algebras of invariant forms on $G/K$ and $G_U/K$. If $\alpha$ is a $G$-invariant form on $G/K$, the image of $\alpha$ by this isomorphism will be called the \emph{form corresponding to $\alpha$ on the compact dual} and will be denoted $\alpha^U$.

The group $G_U$ contains the compact dual $H_U$ of $H$, and one can define a map $\iota: H_U/L \to G_U/K$. This map may not be injective, but it is a covering of finite degree onto its image, since $L$ is a finite index subgroup of $H_U\cap K$. We denote by $[H_U/L]$ the fundamental class of $H_U/L$. 

\begin{defi}
Let $N$ be a closed oriented manifold of dimension $d$ and $[c]$ a rational homology class of degree $k$ on $N$. Let \[\vee: \HH_k(N,\Q) \times \HH_{n-k}(N,\Q)\to \Q\] denote the intersection pairing. The cohomology class $[\alpha] \in \HH^{d-k}(N,\Q)$ is called \emph{Poincaré-dual} to $[c]$ if for any $[c'] \in \HH_{d-k}(N,\Q)$, one has
\[\int_{[c']} [\alpha] = [c] \vee [c']~.\] 
\end{defi}
According to Poincar\'e's duality theorem, every rational homology class of a closed oriented manifold has a unique Poincar\'e-dual cohomology class.

\begin{theo} \label{t:PoincareDual}
The cohomology class of the form 
\[\frac{1}{\Vol(G_U/H_U)}\omega_{G,H}^U \in \HH^\bullet(G_U/K,\Q)\] 
is Poincar\'e-dual to the homology class $\iota_*[H_U/L]$.
\end{theo}

\begin{proof}
Let $x_0$ denote the point $K$ in $G_U/K$. By Lemma \ref{l:ComputationOmegaGH}, we have
\[(\omega_{G,H}^U)_{x_0} = \int_{K/L} \Ad_u^* \omega_{\k^\perp \cap i\h^\perp}\ \d \omega_{K/L}(u)~.\]

Thus, if $\phi$ denotes the projection from $G_U/L$ to $G_U/H_U$ and $\pi$ the projection from $G_U/L$ to $G_U/K$, then one can reproduce word by word the arguments of the previous section and show that
\[\int_C \frac{1}{\Vol(G_U/H_U)}\omega_{G,H}^U = \int_{\pi^{-1}(C)} \frac{1}{\Vol(G_U/H_U)} \phi^*\vol_{G_U/H_U}\]
for any oriented submanifold $C$ of $G_U/K$ of dimension $p$.

Now, the form $\frac{1}{\Vol(G_U/H_U)}\vol_{G_U/H_U}$ is Poincar\'e-dual to the homology class of a point in $G_U/H_U$, and $\phi^*\vol_{G_U/H_U}$ is thus dual to the homology class of the fiber $H_U/L \subset G_U/L$ of the map $\phi$.

Therefore, $\int_{\pi^{-1}(C)} \frac{1}{\Vol(G_U/H_U)} \phi^*\vol_{G_U/H_U}$ counts the homological intersection number between $H_U/L$ and $\pi^{-1}(C)$ in $G_U/L$. This is equal to $k$ times the homological intersection number between $\iota(H_U/L)$ and $C$ in $G_U/K$, where $k$ denotes the degree of the covering map $\iota : H_U/L \to H_U/H_U\!\cap\! K$. Hence $\int_{\pi^{-1}(C)} \frac{1}{\Vol(G_U/H_U)} \phi^*\vol_{G_U/H_U}$ is equal to $[C] \vee \iota_*[H_U/L]$. The conclusion follows.
\end{proof}

\section{Cohomology and inclusion of symmetric spaces} \label{s:InclusionSymSpaces}

In this section, we go deeper into the cohomology theory of symmetric spaces in order to find conditions under which the form $\omega_{G,H}^U$ vanishes and conditions under which it is a \emph{Chern--Weil form}.

We say that $\omega_{G,H}^U$ is a \emph{Chern--Weil form} if its cohomology class is a Chern--Weil characteristic class of the canonical principal $K$-bundle over $G_U/K$ (see Section \ref{s:Rigidity} for details). Our aim is to prove the following theorem:

\begin{theo} \label{t:RankCondition}
Let $\rk$ denote the complex rank of a Lie group.
\begin{itemize}
\item The form $\omega_{G,H}^U$ vanishes when
\[\rk(H_U)- \rk(L) > \rk(G_U) - \rk(K)~.\]
\item If $\omega_{G,H}^U$ does not vanish, then it is a Chern--Weil form if and only if
\[\rk(H_U)- \rk(L) = \rk(G_U) - \rk(K)~.\]
\end{itemize}
\end{theo}

The cohomology of symmetric spaces has been described by the works of Cartan and Borel in the years 1950 \cite{Cartan50,Borel53}. This description is summarized in the following theorem:
\begin{CiteThm}[Cartan] \label{t:CohomologySymSpace}
Let $G_U/K$ be a symmetric space of compact type, with $K$ connected. Then
\begin{itemize}
\item The cohomology algebra $\HH^\bullet(G_U/K,\Q)$ is isomorphic to a tensor product
\[\HH_{even}^\bullet(G_U/K,\Q) \otimes \HH_{odd}^\bullet(G_U/K,\Q)~,\]
\item the subalgebra $\HH_{even}^\bullet(G_U/K,\Q)$ is the algebra of Chern--Weil classes of the canonical principal $K$-bundle over $G_U/K$, and is concentrated in even degree,
\item the subalgebra $\HH_{odd}^\bullet(G_U/K,\Q)$ is isomorphic to $\Lambda^\bullet\left(\Prim(G_U/K,\Q)\right)$ where $\Prim(G_U/K,\Q)$ is a vector subspace of dimension $\rk(G_U) - \rk(K)$ generated by elements of odd degree,
\end{itemize}
\end{CiteThm}

The cohomology algebra of a symmetric space thus has the structure of a bi-graded algebra:
\[\HH^\bullet(G_U/K,\Q) = \bigoplus_{p,q \geq 0} \HH_{even}^p(G_U/K,\Q) \otimes \HH_{odd}^q(G_U/K,\Q)~.\]
We will say that a cohomology class $\alpha$ has bi-degree $(p,q)$ if it belongs to $\HH_{even}^p(G_U/K,\Q) \otimes \HH_{odd}^q(G_U/K,\Q)$.

\begin{prop} \label{p:PreserveBigrading}
The map $\iota^*: \HH^\bullet(G_U/K,\Q) \to \HH^\bullet(H_U/L,\Q)$ maps $\HH_{even}^p(G_U/K,\Q)$ to $\HH_{even}^p(H_U/L,\Q)$ and $\HH_{odd}^p(G_U/K,\Q)$ to $\HH_{odd}^p(H_U/L,\Q)$, and thus preserves the bi-grading. Moreover, it maps $\Prim(G_U/K,\Q)$ to $\Prim(H_U/L,\Q)$.
\end{prop}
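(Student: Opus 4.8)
The plan is to realize both halves of the bigrading through the universal fibration that exhibits a compact symmetric space as the fibre of a map of classifying spaces, and then to exploit the naturality of that construction. The compatible inclusions $L\hookrightarrow K$, $L\hookrightarrow H_U$, $H_U\hookrightarrow G_U$, $K\hookrightarrow G_U$ form a commutative square of groups, which induces a map of fibrations
\[
\xymatrix{
H_U/L \ar[d]_{\iota} \ar[r] & BL \ar[d] \ar[r] & BH_U \ar[d] \\
G_U/K \ar[r] & BK \ar[r] & BG_U,
}
\]
in which $H_U/L$ (resp. $G_U/K$) is the fibre of $BL\to BH_U$ (resp. $BK\to BG_U$), the induced map of fibres is exactly $\iota$, and the fibre inclusions classify the canonical $L$- and $K$-bundles. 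Since $\iota^*$ is a homomorphism of graded algebras it automatically preserves cohomological degree, so it suffices to show that it sends $\HH_{even}$ into $\HH_{even}$ and $\Prim$ into $\Prim$; the assertion about $\HH_{odd}$ then follows because $\HH_{odd}^\bullet=\Lambda^\bullet(\Prim)$ and $\iota^*$ is multiplicative.

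For the even part I would use the fact, recalled in Theorem~\ref{t:CohomologySymSpace}, that $\HH_{even}^\bullet(G_U/K,\Q)$ is the image of the fibre-restriction map $\HH^\bullet(BK,\Q)\to\HH^\bullet(G_U/K,\Q)$, that is, the algebra of characteristic classes of the canonical $K$-bundle. Commutativity of the left square identifies $\iota^*$ on this image with the composition $\HH^\bullet(BK,\Q)\to\HH^\bullet(BL,\Q)\to\HH^\bullet(H_U/L,\Q)$, whose image is $\HH_{even}^\bullet(H_U/L,\Q)$; equivalently this is just the functoriality of Chern--Weil classes under the bundle map $H_U\to G_U$ covering $L\hookrightarrow K$. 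Hence $\iota^*\bigl(\HH_{even}^p(G_U/K,\Q)\bigr)\subseteq \HH_{even}^p(H_U/L,\Q)$.

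For the primitive part I would use the transgression in the Serre spectral sequence of the fibration $G_U/K\to BK\to BG_U$. The content of Cartan's and Borel's analysis is that $\Prim(G_U/K,\Q)$ coincides with the space of transgressive odd classes: each primitive generator of degree $2d-1$ is transgressive and transgresses injectively to an indecomposable class in $\HH^{2d}(BG_U,\Q)$, while neither a nontrivial product of primitives nor a product of a positive-degree Chern--Weil class with a primitive survives far enough in the spectral sequence to be transgressive. Transgression is natural with respect to the map of fibrations above: if $\alpha\in\Prim(G_U/K,\Q)$ is transgressive with transgression $\tau(\alpha)\in\HH^{even}(BG_U,\Q)$, then $\iota^*\alpha$ is transgressive in the fibration $H_U/L\to BL\to BH_U$ and $\tau(\iota^*\alpha)$ is the image of $\tau(\alpha)$ under $\HH^\bullet(BG_U,\Q)\to\HH^\bullet(BH_U,\Q)$. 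Thus $\iota^*\alpha$ is again a transgressive odd class, hence lies in $\Prim(H_U/L,\Q)$, giving $\iota^*\Prim\subseteq\Prim$ and completing the argument.

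The main obstacle is precisely the identification of $\Prim$ with the transgressive classes together with the verification that naturality lands exactly in $\Prim$ and not merely in the odd subalgebra: one must rule out that $\iota^*\alpha$ acquires a component in the ideal generated by positive-degree Chern--Weil classes. A convenient cross-check comes from the commutative square relating $G_U/K$ and $H_U/L$ to the groups $G_U$ and $H_U$ via $p\colon G_U\to G_U/K$ and $p_H\colon H_U\to H_U/L$: under $p^*$ the space $\Prim(G_U/K,\Q)$ maps isomorphically onto the $\sigma$-anti-invariant primitives of the Hopf algebra $\HH^\bullet(G_U,\Q)$ (of dimension $\rk(G_U)-\rk(K)$), positive-degree Chern--Weil classes die, and the group inclusion $H_U\hookrightarrow G_U$ induces a Hopf-algebra map automatically carrying primitives to primitives. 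This already shows $\iota^*\alpha$ to be primitive modulo the ideal of positive Chern--Weil classes, and the transgression argument is what removes that final ambiguity.
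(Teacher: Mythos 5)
You should first be aware that the paper itself contains no proof of Proposition~\ref{p:PreserveBigrading}: the author writes that it ``is likely to be a straightforward consequence of the proof of Cartan's theorem'' and defers the proof to the forthcoming paper \cite{Tholozan8}. So there is nothing in the text to compare your argument with, and it must stand on its own. Its architecture is sensible, and the even half is complete: taking $BL=EG_U/L$, $BK=EG_U/K$, $BH_U=EG_U/H_U$, $BG_U=EG_U/G_U$, your square of fibrations strictly commutes, the induced map of fibres is $\iota$, the fibre inclusion $G_U/K\hookrightarrow BK$ classifies the canonical $K$-bundle, and since Theorem~\ref{t:CohomologySymSpace} identifies $\HH^\bullet_{even}(G_U/K,\Q)$ with the image of this fibre restriction, functoriality gives $\iota^*\HH^p_{even}(G_U/K,\Q)\subseteq\HH^p_{even}(H_U/L,\Q)$.

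The genuine gap is the one you flag yourself: the identification of $\Prim$ with the transgressive odd classes. Your naturality argument needs two inclusions, namely $\Prim(G_U/K,\Q)\subseteq\mathrm{(transgressive)}$ for the ambient pair, and --- this is the crucial one --- $\mathrm{(transgressive\ odd)}\subseteq\Prim(H_U/L,\Q)$ for the subpair, since it is the latter that forbids $\iota^*\alpha$ from acquiring components in the ideal generated by positive-degree Chern--Weil classes or among products of three or more primitives. Neither inclusion can be quoted from Theorem~\ref{t:CohomologySymSpace} as stated: that theorem does not even single out $\Prim$ canonically (it is merely ``a vector subspace'' with $\HH^\bullet_{odd}=\Lambda^\bullet\Prim$), let alone characterize it by transgression, so a complete proof must first fix a canonical definition of $\Prim$ and then prove the transgression characterization for it. That characterization is a refined form of Cartan's theorem: one has to show, for instance, that for a positive-degree Chern--Weil class $c$ and a primitive $z$ of degree $2d-1$, the element $\tau(z)\otimes c$ is still nonzero on the page $E_{2d}$ of the Serre spectral sequence of $H_U/L\to BL\to BH_U$, which rests on the fact that $\tau(z)$ does not lie in the ideal generated by the other transgressions --- i.e.\ precisely the Cartan-pair structure of compact symmetric pairs whose proof the author defers to \cite{Tholozan8}. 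Your proposal asserts this rather than proves it. The fallback via $p^*:\HH^\bullet(G_U/K,\Q)\to\HH^\bullet(G_U,\Q)$ invokes further unproved (though correct) facts of the same theory (kernel equal to the ideal of positive Chern--Weil classes, image the anti-invariant primitives), and, as you concede, it only yields $\iota^*\alpha\in\Prim(H_U/L,\Q)$ modulo that ideal, which is strictly weaker than the statement. In short: right strategy, even part done, but the primitive part is exactly the nontrivial content and it remains open in your write-up.
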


This proposition is likely to be a straightforward consequence of the proof of Cartan's theorem. We prove it in the forthcoming paper \cite{Tholozan8}.\\

If $G_U/K$ is a symmetric space of compact type, let us denote by $\dimeven(G_U/K)$ and $\dimodd(G_U/K)$ the maximal degree of a non zero cohomology class in $\HH^\bullet_{even}(G_U/K,\Q)$ and $\HH^\bullet_{odd}(G_U/K,\Q)$, respectively. Since $G_U/K$ is compact and orientable, we obtain by Cartan's theorem that
\[\dimeven(G_U/K) + \dimodd(G_U/K) = \dim(G_U/K)\]
and that
\[\HH^{\dimeven(G_U/K)}_{even}(G_U/K,\Q) \otimes \HH^{\dimodd(G_U/K)}_{odd}(G_U/K,\Q) = \HH^{\dim(G_U/K)}(G_U/K,\Q)~.\]
Thus, both $\HH^{\dimeven(G_U/K)}_{even}(G_U/K,\Q)$ and $\HH^{\dimodd(G_U/K)}_{odd}(G_U/K,\Q)$ have dimension~$1$.

\begin{prop} \label{p:InjectivityOdd}
If $\iota_*[H_U/L]$ does not vanish in $\HH_\bullet (G_U/K, \Q)$, then the homomorphism
\[\iota^*: \HH^{\dimeven(H_U/L)}_{even}(G_U/K,\Q) \to \HH^{\dimeven(H_U/L)}_{even}(H_U/L,\Q)\]
is surjective, and the morphism
\[\iota^*: \Prim(G_U/K,\Q) \to \Prim(H_U/L,\Q)\]
is surjective.
\end{prop}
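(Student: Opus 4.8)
The plan is to argue by Poincaré duality on both symmetric spaces, transporting the nonvanishing of $\iota_*[H_U/L]$ into a statement about top-degree cohomology, and then exploiting the bi-graded structure furnished by Cartan's theorem together with the fact that $\iota^*$ preserves this bi-grading (Proposition \ref{p:PreserveBigrading}). First I would set $d = \dim(H_U/L)$ and observe that, since $H_U/L$ is a closed oriented manifold, its top cohomology $\HH^d(H_U/L,\Q)$ is one-dimensional and, by Cartan's theorem, factors as $\HH^{\dimeven(H_U/L)}_{even} \otimes \HH^{\dimodd(H_U/L)}_{odd}$, each factor being one-dimensional. Thus proving surjectivity of $\iota^*$ onto each of the two top factors is equivalent to proving that $\iota^*$ is surjective onto $\HH^d(H_U/L,\Q)$, i.e. nonzero there; and by the multiplicative (tensor-product) structure this will in turn follow once I produce classes in the image of $\iota^*$ realizing the top degree in each grading separately.

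The key step is to reformulate nonvanishing of $\iota_*[H_U/L]$ as a nondegeneracy statement. By definition of the pushforward and the projection formula, for any $[\alpha] \in \HH^{\dim(G_U/K)-d}(G_U/K,\Q)$ one has $\int_{\iota_*[H_U/L]} \alpha = \int_{[H_U/L]} \iota^*\alpha$. Hence $\iota_*[H_U/L] \neq 0$ is exactly the assertion that $\iota^*: \HH^{\dim(G_U/K)-d}(G_U/K,\Q) \to \HH^d(H_U/L,\Q) = \HH^{top}$ is nonzero, i.e. surjective (the target being one-dimensional). The remaining task is to upgrade this single surjectivity onto the top class into surjectivity onto each bi-graded top factor. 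For this I would use that $\iota^*$ is a homomorphism of bi-graded algebras (Proposition \ref{p:PreserveBigrading}): write the top class of $H_U/L$ as a product of a top even class and a top odd class, and note that any class in $\HH^{\dim(G_U/K)-d}(G_U/K,\Q)$ mapping onto it must itself decompose, so that its even and odd bi-degree components map onto the corresponding top factors of $H_U/L$. Since $\iota^*$ preserves degree within each grading, this forces $\iota^*$ to hit $\HH^{\dimeven(H_U/L)}_{even}(H_U/L,\Q)$ and, taking the odd part, the generators of $\HH^{\dimodd(H_U/L)}_{odd}(H_U/L,\Q)$.

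Finally, to pass from the top odd factor to the primitive subspace I would use that $\HH^\bullet_{odd}$ is the exterior algebra $\Lambda^\bullet(\Prim)$: its top piece is the top exterior power, one-dimensional, namely the product of a basis of $\Prim(H_U/L,\Q)$. Since $\iota^*$ carries $\Prim(G_U/K,\Q)$ into $\Prim(H_U/L,\Q)$ (again Proposition \ref{p:PreserveBigrading}) and is an algebra map, surjectivity onto the top exterior power forces the image of $\iota^*$ restricted to primitives to span all of $\Prim(H_U/L,\Q)$ — for if the image were a proper subspace, all products of its elements would lie in a proper subspace of $\Lambda^{top}$, which vanishes in the top degree. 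I expect the main obstacle to be making this last implication airtight: surjectivity of an algebra map onto the top exterior power $\Lambda^r V$ does force surjectivity onto $V$ only because $\dim V = r$ and $\Lambda^r$ detects spanning sets, so I must be careful that $\rk(H_U)-\rk(L)$ equals the number of primitive generators and that no parity or signature subtlety (for instance a primitive generator of even internal degree) disrupts the identification of the top odd factor with $\Lambda^{top}\Prim$.
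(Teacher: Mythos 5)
Your argument is essentially the paper's own proof: duality turns the nonvanishing of $\iota_*[H_U/L]$ into a class $\alpha$ on $G_U/K$ with $\iota^*\alpha\neq 0$ in the top degree of $H_U/L$; the bi-grading (Cartan's theorem plus Proposition \ref{p:PreserveBigrading}) forces the component of $\alpha$ of bi-degree $\left(\dimeven(H_U/L),\dimodd(H_U/L)\right)$ to be the only one that can survive, hitting both one-dimensional top factors; and the identification $\HH^\bullet_{odd}(H_U/L,\Q)=\Lambda^\bullet\Prim(H_U/L,\Q)$ converts surjectivity in top odd degree into surjectivity on primitives, exactly as in the paper. The one correction needed is a degree slip: the detecting class must lie in $\HH^{\dim(H_U/L)}(G_U/K,\Q)$, not in $\HH^{\dim(G_U/K)-\dim(H_U/L)}(G_U/K,\Q)$ --- the pairing $\int_{\iota_*[H_U/L]}\alpha$ only makes sense when $\deg\alpha=\dim(H_U/L)$, and $\iota^*$ preserves cohomological degree --- after which your argument coincides with the paper's.
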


\begin{proof}
If $\iota_*[H_U/L]$ does not vanish in $\HH_\bullet (G_U/K, \Q)$, then, by Poincaré duality, there exists an element $\alpha \in \HH^{\dim(H_U/L)}(G_U/K,\Q)$ such that $\iota^*\alpha \neq 0$. By Cartan's theorem, we can write
\[\alpha = \sum_{k+l = \dim(H_U/L)} \beta_k \otimes \gamma_l~,\]
with $\beta_k \in \HH^k_{even}(G_U/K,\Q)$ and $\gamma_l \in \HH^{l}_{odd}(G_U/K,\Q)$.

Since $\iota^*\beta_k = 0$ for $k> \dimeven(H_U/L)$ and $\iota^*\gamma_l = 0$ for $l> \dimodd(H_U/L)$, we get that
\[\iota^*\alpha = \iota^*\beta_{\dimeven(H_U/L)} \otimes \iota^*\gamma_{\dimodd(H_U/L)}\neq 0~,\]
which implies that both $i^*\beta_{\dimeven(H_U/L)}$ and $\iota^*\gamma_{\dimodd(H_U/L)}$ do not vanish. Since $\HH^{\dimeven(H_U/L)}(H_U/L,\Q)$ and $\HH^{\dimodd(H_U/L)}(H_U/L,\Q)$ are one dimensional, we conclude that
\[\iota^*: \HH^{\dimeven(H_U/L)}_{even}(G_U/K,\Q) \to \HH^{\dimeven(H_U/L)}_{even}(H_U/L,\Q)\]
and 
\[\iota^*: \HH^{\dimodd(H_U/L)}_{odd}(G_U/K,\Q) \to \HH^{\dimodd(H_U/L)}_{odd}(H_U/L,\Q)\]
are surjective.

Now, by Cartan's theorem, $\HH^\bullet_{odd}(H_U/L, \Q) = \Lambda^\bullet \Prim(H_U/L,\Q)$. If $\iota^*:\Prim(G_U/K,\Q)\to \Prim(H_U/L,\Q)$ were not surjective, then $\iota^*\left(\HH^\bullet_{odd}(G_U/K,\Q)\right)$ would be included in $\Lambda^\bullet F$ for a proper subspace $F$ of $\Prim(H_U/L,\Q)$, and it would not contain any form of top degree. Since
\[\iota^*: \HH^{\dimodd(H_U/L)}_{odd}(G_U/K,\Q) \to \HH^{\dimodd(H_U/L)}_{odd}(H_U/L,\Q)\]
is surjective, we conclude that $\iota^*:\Prim(G_U/K,\Q)\to \Prim(H_U/L,\Q)$ is surjective.
\end{proof}

We can now prove Theorem \ref{t:RankCondition}.

\begin{proof}[Proof of Theorem \ref{t:RankCondition}]
Assume that $\omega_{G,H}^U$ does not vanish. Then, by Proposition \ref{t:PoincareDual}, $\iota_*[H_U/L]$ does not vanish in $\HH_\bullet(G_U/K,\Q)$. By Proposition \ref{p:InjectivityOdd}, the map $\iota^*: \Prim(G_U/K,\Q) \to \Prim(H_U/L,\Q)$ is surjective, which implies that
\[\rk(H_U)-\rk(L) = \dim \Prim(H_U/L,\Q) \leq \dim \Prim(G_U/K,\Q) = \rk(G_U) - \rk(K)~.\]
This proves the first point. \\

Now, since $\omega_{G,H}^U$ is Poincaré dual to $i_*[H_U/L]$, we have
\[\int_{H_U/L} i^*\alpha = \int_{G_U/K} \alpha \wedge \omega_{G,H}^U\]
for all $\alpha \in \HH^{\dim(H_U/L)}(G_U/K,\Q)$.
In particular, for all $(k,l)$ such that $k+l=\dim(H_U/L)$ and for all  $\alpha \in \HH^{k+l}(G_U/K,\Q)$ of bi-degree $(k,l)$, we have $\int_{G_U/K} \alpha \wedge \omega_{G,H}^U = 0$ unless
\[(k,l) = \left(\dimeven(H_U/L), \dimodd(H_U/L)\right)~.\]
This implies that $\omega_{G,H}^U$ has bi-degree \[\left(\dimeven(G_U/K) - \dimeven(H_U/L), \dimodd(G_U/K) - \dimodd(H_U/L)\right)~.\]
Therefore, $[\omega_{G,H}^U]$ belongs to $\HH^\bullet_{even}(G_U/K,\Q)$ if and only if
\begin{equation} \label{eq:EqualityOddDim} \dimodd(G_U/K) = \dimodd(H_U/L)~.\end{equation}
Since $\iota^*: \Prim(G_U/K,\Q) \to \Prim(H_U/L, \Q)$ is surjective, Equality \eqref{eq:EqualityOddDim} happens if and only if it is also injective, which is equivalent to
\[\rk(H_U)-\rk(L) = \rk(G_U) - \rk(K)~.\]
This concludes the proof of Theorem \ref{t:RankCondition}.
\end{proof}

\section{Characteristic classes and rationality of the volume} \label{s:Rigidity}

In this section, we explain why, when $\omega_{G,H}^U$ is a Chern--Weil form, the volume of every compact quotient of $G/H$ is a rational multiple of $\Vol(G_U/H_U)$. This is a classical argument which relies on the fact that, by Proposition \ref{t:PoincareDual}, the form $\frac{1}{\Vol(G_U/H_U)}\omega_{G,H}^U$ represents an \emph{integral} cohomology class.

The precise result that we will prove is the following:
\begin{theo} \label{t:RationalVolumePrecise}
Assume that we have the equality:
\[\rk(G_U) - \rk(K) = \rk(H_U) - \rk(L)~.\]
Then there exists an integer $d$ such that, for any torsion-free discrete subgroup of $G$ acting properly discontinuously on $G/H$, the volume $\Vol(\Gamma \backslash G/H)$ is an integral multiple of $\frac{1}{d}\Vol(G_U/H_U)$.
\end{theo}

\begin{rmk}
Note that, given a normalization of the volume form on $G/H$, there is a canonical way to normalize the volume on $G_U/H_U$ accordingly. Thus the statement of Theorem \ref{t:RationalVolumePrecise} does not depend on the choice of such a normalization.
\end{rmk}

\begin{proof}[Proof of Theorem \ref{t:RationalVolumePrecise}]

Let $BK$ be a classifying space for $K$ and $EK \to BK$ be the associated universal principal $K$-bundle. There exists a map $f: G_U/K \to BK$, unique up to homotopy, such that the principal $K$-bundle $G_U$ is isomorphic to $f^*EK$. The map $f$ induces a homomorphism
\[f^*: \HH^\bullet(BK,\R) \to \HH^\bullet (G_U/K, \R)~.\]
By Theorem \ref{t:CohomologySymSpace} and by definition of Chern--Weil classes, the image of $f^*$ is the subalgebra $\HH^\bullet_{even}(G_U/K,\R)$. It contains as a lattice the $\Z$-module $f^* \HH^\bullet(BK,\Z)$.

It follows from Proposition \ref{t:PoincareDual} that the form $\frac{1}{\Vol(G_U/H_U)}\omega_{G,H}^U$ represents an integral cohomology class. 
Moreover, we saw in the previous section that, under the condition $\rk(G_U) - \rk(K) = \rk(H_U) - \rk(L)$, this cohomology class belongs to $\HH^\bullet_{even}(G_U/K,\R)$. Therefore, the cohomology class $\frac{1}{\Vol(G_U/H_U)}[\omega_{G,H}^U]$ belongs to the $\Z$-module $\Lambda = \HH^\bullet_{even}(G_U/K,\R) \cap \HH^\bullet(G_U/K,\Z)$. Since we have
\[f^* \HH^\bullet(BK,\Z) \subset \Lambda\]
and since $f^* \HH^\bullet(BK,\Z)$ is a lattice in $\HH^\bullet_{even}(G_U/K,\R)$, we obtain that $f^* \HH^\bullet(BK,\Z)$ has finite index in $\Lambda$. Therefore, there exists an integer $d$ such that \[\frac{d}{\Vol(G_U/H_U)}[\omega_{G,H}^U] \in f^* \HH^\bullet(BK,\Z)~.\]

Let us now denote by $\Sym^\bullet(\k)^K$ the algebra of polynomials on $\k$ invariant by the adjoint action of $K$. The Chern--Weil theory gives the existence of an isomorphism
\[\Phi: \HH^\bullet(BK,\R) \to \Sym^\bullet(\k)^K\]
such that, for any smooth map $f$ from a manifold $M$ to $BK$ and for any cohomology class $\alpha$ in $\HH^\bullet(BK,\R)$, the class $f^*\alpha$ in $\HH^\bullet(M,\R)$ is represented by the differential form $\Phi(\alpha)(F_\nabla)$, where $F_\nabla$ is the curvature of any connection on the principal bundle $f^*EK$. We denote by $\Sym_\Z^\bullet(\k)^K$ the image by $\Phi$ of $\HH^\bullet(BK,\Z)$.

Let $\nabla$ and $\nabla^U$ denote respectively the connections on the $K$-principal bundles over $G/K$ and $G_U/K$ given by the distribution orthogonal to the fibers (with respect to the Killing metric). These connections (hence their curvature forms) are respectively $G$ and $G_U$-invariant.

By the preceeding remarks, there is a polynomial $P \in \Sym_\Z^\bullet(\k)^K$ such that 
$\frac{d}{\Vol(G_U/H_U)}\omega_{G,H}^U$ and $P(F_{\nabla^U})$ are cohomologous. Since both forms are $G_U$-invariant, we actually have
\[\frac{d}{\Vol(G_U/H_U)}\omega_{G,H}^U=P(F_{\nabla^U})~.\]
By duality between the symmetric spaces $G_U/K$ and $G/K$, we then have
\[\frac{d}{\Vol(G_U/H_U)}\omega_{G,H} = (-1)^{\deg P}\, P(F_\nabla)~.\]
Let us denote by~$\alpha$ the inverse image of $P$ by the Chern--Weil isomorphism $\Phi$.

By Theorem \ref{t:VolumeCliffordKlein}, we have
\begin{eqnarray*}
d\,\frac{\Vol(\Gamma \backslash G/H)}{\Vol(G_U/H_U)} & = & \left | \int_{[\Gamma]} \frac{d}{\Vol(G_U/H_U)}\omega_{G,H} \right|\\
 & = & \left|\int_{[\Gamma]} P(F_\nabla)\right| \\
 & = & \left| \int_{[\Gamma]} f^*\alpha\right|~,
\end{eqnarray*}
where $f: \Gamma\backslash G/K \to BK$ is such that the $K$-principal bundle $\Gamma \backslash G$ over $\Gamma\backslash G/K$ is isomorphic to $f^*EK$. Since $\alpha$ belongs to $\HH^\bullet(BK,\Z)$, we obtain that $\frac{d \Vol(\Gamma \backslash G/H)}{\Vol(G_U/H_U)}$ is an integer. This proves Theorem \ref{t:RationalVolumePrecise}.
\end{proof}

Finally, let us conclude the proof of Theorem \ref{t:RationalityVolume}. Recall that the complex rank of $\SO(n)$ is $\left \lfloor \frac{n}{2}\right \rfloor$ and that the complex rank $\SL(n,\R)$ is $n-1$. It is then a simple computation to verify that the equality $\rk(H_U) - \rk(L) = \rk(G_U) - \rk(K)$ is satisfied in cases $(1)$ and $(2)$. For case $(3)$, it is a well-known fact that $\rk(G_U)=\rk(K)$ when $G_U/K$ is Hermitian (see \cite[Proposition 2.3]{HartnickOtt12}). In that case, any $G_U$-invariant form is a Chern--Weil form. In particular, $\omega_{G,H}^U$ is a Chern--Weil form (which vanishes if $\rk(H_U) - \rk(L) >0$).

\section{The case of group manifolds} \label{s:GroupSpaces}

In this section, we specify the previous results in the case of compact quotients of \emph{group spaces}.

\begin{defi} \label{d:GroupSpace}
A group space is a semi-simple Lie group $H$ provided with the action of $H\times H$ given by
\[(g,h)\cdot x = gxh^{-1}\]
for all $(g,h)\in H\times H$ and all $x\in H$.
\end{defi}
The group space $H$ can also be presented as the quotient $H\times H/\Delta(H)$, where $\Delta(H)$ denotes the diagonal embedding of $H$ in $H\times H$.

Group spaces form a large class of pseudo-Riemannian symmetric spaces (the pseudo-Riemannian metric being the Killing metric on $H$) which is interesting to study for several reasons.

First, given a compact Clifford--Klein form $\Gamma \backslash G/H$ of a reductive homogeneous space and a uniform lattice $\Lambda$ in $H$, one can construct the double quotient
\[\Gamma \backslash G/\Lambda~,\]
which is a compact Clifford--Klein form of the group space $G$. In order to understand all compact Clifford--Klein forms of reductive homogeneous spaces, it is thus enough (in theory) to understand compact quotients of group spaces.

The second motivation for studying group spaces is that, when $H$ has rank one, its compact Clifford--Klein forms are well-understood, thanks to results of Kobayashi \cite{Kobayashi93,Kobayashi98}, Kassel \cite{Kassel08}, Gu\'eritaud \cite{GueritaudKassel}, Guichard and Wienhard \cite{GGKW}.

Let $\Gamma$ be a uniform lattice in $H$ and $\rho: \Gamma \to H$ a homomorphism. We denote by $\Gamma_\rho$ the graph of $\rho$, i.e. the subgroup of $H\times H$ defined by
\[\Gamma_\rho = \{(\gamma, \rho(\gamma)) , \gamma \in \Gamma\}~.\]
The \emph{translation length} of an element $h\in H$ is defined by
\[l(h) = \inf_{x\in H/L} d(x,h\cdot x)~,\]
where $d$ is the distance associated to the $H$-invariant symmetric Riemannian metric on $H/L$.
We say that the homomorphism $\rho$ is \emph{uniformly contracting} if there exists $\lambda < 1$ such that for any $\gamma \in \Gamma$,
\[ l(\rho(\gamma)) \leq \lambda l(\gamma)~.\]

\begin{theo}[Kobayashi \cite{Kobayashi93}, Kassel \cite{Kassel08}, Gu\'eritaud--Guichard--Kassel--Wienhard \cite{GGKW}] \label{t:QuotientsSU(d,1)}
Let $H$ be a Lie group of rank $1$. Then every torsion-free discrete subgroup of $H\times H$ acting properly discontinuously and cocompactly on $H$ is equal to $\Gamma_\rho$ for some uniform lattice $\Gamma$ in $H$ an some contracting homomorphism $\rho:\Gamma \to H$.
\end{theo}
Conversely, Benoist--Kobayashi's properness criterion \cite{Benoist96,Kobayashi96} implies that such a group $\Gamma_\rho$ does act properly discontinuously and cocompactly on $H$.\\

The purpose of this section is to express the volume of $\Gamma_\rho \backslash H$ when $H = \SO_0(d,1)$ or $\SU(d,1)$ in terms of classical invariants associated to the representation $\rho$.\footnote{The case where $H$ is another Lie group of rank $1$ (namely $\Sp(d,1)$ of $\mathrm{F}_4)$ is not interesting because the representation $\rho$ must be virtually trivial, according to the super-rigidity theorem of Corlette \cite{Corlette92}.} In the case of~$\SO_0(d,1)$, we will recover the main theorem of \cite{Tholozan5}.\\

In order to do so, we first give a general way to compute the form $\omega_{G,H}$ for any group space $H\times H/\Delta(H)$, knowing the algebra of $H$-invariant forms on $H/L$. We thus restrict to the case where $G = H\times H$ acts on $X = H$ by left and right multiplication. To simplify notations, we denote by $\omega_H$ the form $\omega_{H\times H, \Delta(H)}$ constructed in Section \ref{s:FiberwiseIntegration} and by $\omega_H^U$ the corresponding form on the compact dual. The forms $\omega_H$ and $\omega_H^U$ are respectively a $H\times H$-invariant form on $H/L \times H/L$ and a $H_U \times H_U$-invariant form on $H_U/L \times H_U/L$.

Let $X$ be a compact oriented manifold of dimension $d$. We denote by $\vee$ the homological intersection pairing of $X$ and by $\wedge$ the cohomological product. For $0\leq k \leq d$, let us fix a basis $(e_1^k, \ldots , e_{n_k}^k)$ of the torsion-free part of $\HH_k(X, \Z)$. Let us denote by $({e_1^k}^*, \ldots , {e_{n_k}^k}^*)$ the dual basis for the intersection pairing, i.e. the basis of the torsion-free part of $\HH_{d-k}(X,\Z)$ characterized by
\[e_i^k \vee e_j^{d-k} = \delta_{ij}~.\]
Finally, let us denote by $(\alpha_1^k,\ldots , \alpha_{n_k}^k)$ and $({\alpha_1^k}^*, \ldots , {\alpha_{n_k}^k}^*)$ the bases of $\HH^k(X,\Q)$ and $\HH^{d-k}(X,\Q)$ satisfying respectively
\[\int_{e_i^k} \alpha_j^k = \delta_{ij}\]
and 
\[\int_{{e_i^k}^*} {\alpha_j^k}^* = \delta_{ij}~.\]

Recall that the cohomology ring of $X\times X$ is naturally isomorphic to the tensor product 
\[\HH^\bullet(X,\Q) \otimes \HH^\bullet(X,\Q)~.\]

\begin{defi}
We call \emph{Lefschetz cohomology class} on $X \times X$ the cohomology class of degree $d$ defined by
\[\beta_{Lef} = \sum_{k=0}^d (-1)^{d-k} \sum_{i = 1}^{n_k} \alpha_i^k \otimes {\alpha_i^k}^*~.\]
\end{defi}

The Lefschetz cohomology class on $H_U/L \times H_U/L$ can be represented by a unique $H_U\times H_U$-invariant form that we call the \emph{Lefschetz form}. We also call \emph{Lefschetz form} the corresponding $H\times H$-invariant form on the dual symmetric space $H/L\times H/L$.\\

The following proposition characterizes the Lefschetz cohomology class and shows in particular that it does not depend on our choice of basis for the homology.
\begin{prop}\label{p:DiagonalDual}
The Lefschetz cohomology class of $X$ is Poincar\'e-dual to the diagonal embedding of $X$ in $X\times X$. 
\end{prop}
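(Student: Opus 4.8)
This is the classical computation of the Poincar\'e dual of the diagonal, adapted to the mixed homological/cohomological bases fixed above, and I would prove it by checking the defining property of Poincar\'e duality directly. Since $\beta_{Lef}$ and $\Delta_*[X]$ both sit in complementary degree $d$ inside the $2d$-manifold $X\times X$, and since both sides of the required identity $\int_{[c']}\beta_{Lef} = \Delta_*[X]\vee[c']$ are linear in $[c']$, it suffices to test it on a basis of $\HH_d(X\times X,\Q)$. By the K\"unneth theorem such a basis is provided by the homological cross products $e_i^k\times e_j^{d-k}$ with $0\leq k\leq d$, so the whole proposition reduces to the scalar identities $\int_{e_i^k\times e_j^{d-k}}\beta_{Lef} = \Delta_*[X]\vee(e_i^k\times e_j^{d-k})$.

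For the left-hand side I would pair $e_i^k\times e_j^{d-k}$ against $\beta_{Lef}$ term by term. Under the K\"unneth pairing a summand $\alpha_a^m\otimes{\alpha_a^m}^*$ contributes only when its bidegree $(m,d-m)$ matches $(k,d-k)$, so only the block $m=k$ survives; the relation $\int_{e_i^k}\alpha_a^k=\delta_{ai}$ then collapses the inner sum and leaves $\int_{e_j^{d-k}}{\alpha_i^k}^*$ up to a Koszul sign. Expanding $e_j^{d-k}$ in the intersection-dual basis $\{{e_b^k}^*\}$ and using $\int_{{e_b^k}^*}{\alpha_i^k}^*=\delta_{bi}$ together with $e_i^k\vee {e_b^k}^*=\delta_{ib}$ identifies this coefficient with the intersection number $e_i^k\vee e_j^{d-k}$. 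Hence the left-hand side equals $e_i^k\vee e_j^{d-k}$ times an explicit sign coming from the weight $(-1)^{d-k}$ in $\beta_{Lef}$ and from the K\"unneth pairing.

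For the right-hand side I would argue geometrically: choosing transverse cycles $A,B$ representing $e_i^k,e_j^{d-k}$, the product cycle $A\times B$ meets the diagonal exactly in the points $(x,x)$ with $x\in A\cap B$, so the intersection of $\Delta_*[X]$ with $e_i^k\times e_j^{d-k}$ is canonically in bijection with $A\cap B$ and therefore equals $\pm(e_i^k\vee e_j^{d-k})$, the sign being the orientation sign of the diagonal's normal bundle against the product orientation.

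Both sides therefore equal $e_i^k\vee e_j^{d-k}$ up to sign, and the content of the proposition is precisely that these two signs agree for every $k$. Matching them is the only genuine difficulty: one must reconcile the Koszul sign of the K\"unneth pairing, the graded symmetry $a\vee b=(-1)^{k(d-k)}b\vee a$ of the intersection form, and the orientation sign produced by the diagonal, and verify that the factor $(-1)^{d-k}$ built into $\beta_{Lef}$ is exactly what makes them coincide. Everything else is formal, so I expect the whole proof to hinge on this orientation bookkeeping. A cleaner variant that absorbs part of it is to first check that $\{\alpha_i^k\}$ and $\{{\alpha_i^k}^*\}$ are Poincar\'e-dual bases for the pairing $(\alpha,\gamma)\mapsto\int_X\alpha\wedge\gamma$ (which follows from $\int_{e_i^k}\alpha_j^k=\delta_{ij}$, $e_i^k\vee {e_b^k}^*=\delta_{ib}$ and the compatibility of cap product with the intersection form), and then invoke the standard dual-basis formula for the diagonal class.
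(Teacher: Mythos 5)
Your strategy is structurally the same as the paper's: by K\"unneth and linearity, reduce the defining identity of Poincar\'e duality to a computation on basis elements of $\HH_d(X\times X,\Q)$, evaluate $\beta_{Lef}$ on them via the duality relations among the chosen bases, and compute the intersection with $\Delta_X$ geometrically through transverse representatives. The only structural difference is the choice of test basis: the paper pairs against $e_i^k\otimes {e_j^k}^*$, which makes both sides collapse immediately to $(-1)^{d-k}\delta_{ij}$, whereas your choice $e_i^k\times e_j^{d-k}$ forces an extra expansion of $e_j^{d-k}$ in the intersection-dual basis; this is harmless but adds bookkeeping.

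The genuine problem is that your proposal stops exactly at the step that carries the content of the proposition. You reduce both sides to $\pm\bigl(e_i^k\vee e_j^{d-k}\bigr)$ and then state that reconciling the two signs --- the Koszul sign of the K\"unneth pairing, the graded symmetry of the intersection form, the orientation of the diagonal, and the weight $(-1)^{d-k}$ in $\beta_{Lef}$ --- is ``the only genuine difficulty,'' which you do not carry out. But that sign matching is precisely what the proposition asserts: without it, one cannot distinguish the stated class from, say, $\sum_k(-1)^k\sum_i\alpha_i^k\otimes{\alpha_i^k}^*$, and the proof of the theorem on group spaces (Theorem \ref{t:VolQuotientsSU(d,1)}) would then produce wrong relative signs, e.g.\ $\Vol(\Gamma\backslash\H^d)-(-1)^d\Vol(\rho)$ instead of the correct combination. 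The paper does commit to and check the signs: $\int_{e_i^k\otimes{e_j^k}^*}\beta_{Lef}=(-1)^{d-k}\delta_{ij}$ is immediate from the definitions, and on the geometric side it verifies that each positive intersection point of $e_i^k$ with ${e_j^k}^*$ contributes $(-1)^{d-k}$ to the intersection of $e_i^k\otimes{e_j^k}^*$ with $\Delta_X$, so the two sides agree. Your suggested ``cleaner variant'' (checking that $\{\alpha_i^k\}$ and $\{{\alpha_i^k}^*\}$ are dual for the cup-product pairing and invoking the standard dual-basis formula for the diagonal class) would indeed close the argument, but as written it is also only a sketch: the sign in that standard formula is $(-1)^{\deg}$ attached to the first factor, and reconciling it with the weight $(-1)^{d-k}$ and with the paper's intersection-pairing conventions is again the same unperformed bookkeeping.
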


In particular, when integrating the Lefschetz cohomology class on the graph of some map $f:X \to X$, one recovers the Lefschetz trace formula. Hence our choice of terminology.\\

\begin{proof}
Let $\Delta_X$ denote the diagonal embedding of $X$ in $X\times X$. We want to prove that for any $u\in \HH_d(X\times X, \Q)$, the number $\int_u \beta_{Lef}$ equals the homological intersection number between $u$ and $\Delta_X$. Since
\[\HH_d(X,\Q) = \bigoplus_{k=0}^d \HH_k(X,\Q) \otimes \HH_{d-k}(X,\Q)~,\]
it is enough to prove it for $u$ of the form $e_i^k \otimes {e_j^k}^*$, for all $0\leq k\leq d$ and all $1\leq i,j\leq n_k$.

By definition of $\beta_{Lef}$, we have
\[\int_{e_i^k \otimes {e_j^k}^*} \beta_{Lef} = (-1)^{d-k} \delta_{ij}~.\]

On the other side, intersections between (cycles representing) $e_i^k \otimes {e_j^k}^*$ and $\Delta_X$ correspond exactly to intersections between $e_i^k$ and ${e_j^k}^*$. Indeed, $e_i^k$ intersects ${e_j^k}^*$ at a point $x\in X$ if and only if $e_i^k \times e_j^{n-k}$ intersects $\Delta_X$ at $(x,x)$. Taking orientations into account, one checks that a positive intersection between $e_i^k$ and ${e_j^k}^*$ gives an intersection of sign $(-1)^{d-k}$ between $e_i^k \otimes {e_j^k}^*$ and $\Delta_X$. We thus obtain
\[\left( e_i^k \otimes {e_j^k}^*\right) \vee \Delta_X = (-1)^{d-k} e_i^k \vee {e_j^k}^*= (-1)^{d-k} \delta_{ij}~.\]
\end{proof}

By Proposition \ref{t:PoincareDual}, the form $\frac{1}{\Vol(H_U)}\omega_H^U$ on $G_U/K = H_U/L\times H_U/L$ is Poincaré dual to the diagonal embedding of $H_U/L$. By Propostion \ref{p:DiagonalDual}, we thus get:

\begin{coro}
The form $\frac{1}{\Vol(H_U)} \omega_H$ is the Lefschetz form on $H/L\times H/L$.\\
\end{coro}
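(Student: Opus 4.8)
The plan is to first prove the statement on the compact dual $H_U/L \times H_U/L$ and then transport it to $H/L \times H/L$ through the duality between invariant forms. Specializing to a group space means taking $G = H \times H$ with the subgroup $\Delta(H)$, so that $K = L \times L$, the compact dual $G_U$ is $H_U \times H_U$, the compact dual of $\Delta(H)$ is $\Delta(H_U)$ with maximal compact $\Delta(L)$, and $G_U/K$ is canonically $H_U/L \times H_U/L$. Under these identifications the map $\iota \colon H_U/L \to G_U/K$ of Section \ref{s:CompactDual} becomes the diagonal embedding $\Delta_{H_U/L}$, while $G_U/H_U = (H_U \times H_U)/\Delta(H_U)$ is the group space $H_U$, so that $\Vol(G_U/H_U) = \Vol(H_U)$. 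This last identification is what produces the normalizing constant $\frac{1}{\Vol(H_U)}$ appearing in the statement.

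With this dictionary in place, Theorem \ref{t:PoincareDual} applied to the pair $(H \times H, \Delta(H))$ says that the cohomology class of $\frac{1}{\Vol(H_U)}\omega_H^U$ is Poincaré-dual to $\iota_*[H_U/L]$, i.e. to the class of the diagonal $(\Delta_{H_U/L})_*[H_U/L]$. On the other hand, Proposition \ref{p:DiagonalDual} asserts that the Lefschetz cohomology class $[\beta_{Lef}]$ of $H_U/L$ is Poincaré-dual to the very same diagonal class. Since Poincaré duals are unique, the two classes agree: $\frac{1}{\Vol(H_U)}[\omega_H^U] = [\beta_{Lef}]$ in $\HH^\bullet(H_U/L \times H_U/L, \Q)$.

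Next I would upgrade this identity of cohomology classes to an identity of forms. Both $\frac{1}{\Vol(H_U)}\omega_H^U$ and the Lefschetz form are $H_U \times H_U$-invariant forms on the compact symmetric space $H_U/L \times H_U/L$, and on such a space every de Rham class has a unique invariant representative --- this is precisely what makes the notion of Lefschetz form well-defined. Hence the equality of classes forces $\frac{1}{\Vol(H_U)}\omega_H^U$ to be the Lefschetz form on $H_U/L \times H_U/L$. Finally, the duality isomorphism between $H_U \times H_U$-invariant forms on the compact dual and $H \times H$-invariant forms on $H/L \times H/L$ carries $\omega_H^U$ to $\omega_H$ and, by definition, the compact Lefschetz form to the Lefschetz form on $H/L \times H/L$; applying it to the identity just obtained shows that $\frac{1}{\Vol(H_U)}\omega_H$ is the Lefschetz form, as desired.

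The only genuinely delicate point is the passage from cohomology classes to actual forms in the third paragraph: it rests on the fact that an invariant differential form on a compact symmetric space is determined by its de Rham class (invariant forms are harmonic, and harmonic representatives are unique). This is what makes the conclusion an equality of forms rather than merely of cohomology classes. Once it is granted, the remainder of the argument is pure bookkeeping --- unwinding the group-space identifications and quoting Theorem \ref{t:PoincareDual} together with Proposition \ref{p:DiagonalDual}.
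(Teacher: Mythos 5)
Your proof is correct and follows essentially the same route as the paper: specialize Theorem~\ref{t:PoincareDual} to the pair $(H\times H,\Delta(H))$ so that $\frac{1}{\Vol(H_U)}\omega_H^U$ becomes Poincar\'e-dual to the diagonal, invoke Proposition~\ref{p:DiagonalDual} and uniqueness of Poincar\'e duals, and conclude via uniqueness of the invariant (harmonic) representative before transporting to $H/L\times H/L$ by duality. The only difference is that you spell out the group-space dictionary and the cohomology-class-to-form upgrade, which the paper leaves implicit in its definition of the Lefschetz form.
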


Let us now apply this corollary to the case where $H$ is $\SO_0(d,1)$ or $\SU(n,1)$.

Let $\vol_{\H^d}$ denote the volume form on the hyperbolic space $\H^d$, which is the symmetric space of $\SO_0(d,1)$. If $\Gamma$ is a uniform lattice in $\SO_0(d,1)$ and $\rho: \Gamma \to \SO_0(d,1)$ a homomorphism, we define the \emph{volume} of $\rho$ by
\[\Vol(\rho) = \int_{\H^d/\Gamma} f^*\vol_{\H^d}~,\]
where $f: \H^d \to \H^d$ is any $\rho$-equivariant map.

Let $\omega$ denote the K\"ahler form on the complex hyperbolic space $\H^d_\C$, which is the symmetric space of $\SU(d,1)$. We normalize $\omega$ so that the corresponding form on the compact dual symmetric space $\ProjC{d}$ is a generator of $\HH^2(\ProjC{d},\Z)$. If $\Gamma$ is a uniform lattice in $\SU(d,1)$ and $\rho: \Gamma \to \SU(d,1)$ a homomorphism, we define
\[\tau_k(\rho) = \int_{\Gamma \backslash \H^d_\C} f^*\omega^k \wedge \omega^{d-k}~,\]
where $f:\H^d_\C \to \H^d_C$ is any smooth $\rho$-equivariant map. The number $\tau_1(\rho)$ is often called the \emph{Toledo invariant} of $\rho$, while $\tau_d(\rho)$ is the \emph{volume} of the representation $\rho$.

\begin{theo}{\ \\ \vspace{-0.5cm}} \label{t:VolQuotientsSU(d,1)}
\begin{itemize}
\item If $\Gamma$ is a uniform lattice in $\SO_0(d,1)$ and $\rho:\Gamma \to \SO_0(d,1)$ a uniformly contracting representation, then
\[\Vol \left(\Gamma_\rho \backslash \SO_0(d,1)\right) = \Vol(\SO(d)) \left | \Vol(\Gamma \backslash \H^d) + (-1)^d \Vol(\rho)\right|~.\]
\item If $\Gamma$ is a uniform lattice in $\SU(d,1)$ and $\rho:\Gamma \to \SU(d,1)$ is a uniformly contracting representation, then
\[\Vol \left(\Gamma_\rho \backslash \SU(d,1)\right) = \Vol(\SU(d+1)) \left|\sum_{k=0}^d \tau_k(\rho)\right|~.\]
\end{itemize}
\end{theo}

\begin{proof}
The compact symmetric space dual to $\H^d$ is $\S^d$, whose cohomology ring is generated by $\1$ and the fundamental class. We deduce that the Lefschetz form of $\H^d \times \H^d$ is
\[\frac{1}{\Vol(\S^d)} \left(\vol_{\H^d} \otimes \1 + (-1)^d \1  \otimes\vol_{\H^d} \right)~.\]

Clearly, Theorem \ref{t:VolQuotientsSU(d,1)} is consistent with taking finite index subgroups. By Selberg's lemma, we can thus assume that $\Gamma$ is torsion-free. Let $f: \H^d \to \H^d$ be a smooth $\rho$-equivariant map. Then the graph of $f$ is a $\Gamma_\rho$-invariant submanifold of dimension $d$ of $\H^d \times \H^d$ on which $\Gamma_\rho$ acts freely, properly discontinuously and cocompactly. Let us denote by $\Graph(f)$ its quotient by $\Gamma_\rho$:
\[\mathrm{Graph}(f) = \Gamma_\rho \backslash \{(x,f(x)), x\in \H^d\} \subset \Gamma_\rho \backslash \H^d\times \H^d~.\]
Then $\Graph(f)$ represents the homology class $[\Gamma_\rho]$ and by Theorem \ref{t:VolumeCliffordKlein}, we have
\begin{eqnarray*}
\Vol \left(\Gamma_\rho \backslash \SO_0(d,1)\right) & = & \frac{\Vol(\SO(d+1))}{\Vol(\S^d)} \left |\int_{\Graph(f)} \vol_{\H^d} \otimes \1 + (-1)^d \1  \otimes\vol_{\H^d} \right| \\
& = & \Vol(\SO(d)) \left | \int_{\Gamma \backslash \H^d} \vol_{\H^d} \wedge f^* \1 + (-1)^d \1 \wedge f^*\vol_{\H^d} \right | \\
& = & \Vol(\SO(d)) \left | \Vol(\Gamma \backslash \H^d) + (-1)^d \Vol(\rho) \right |~.
\end{eqnarray*}

Similarly, the integral cohomology ring of $\ProjC{d}$ is generated by the powers of the form symplectic form $\omega^U$. We deduce that the Lefschetz form of $\H^d_\C \times \H^d_\C$ is
\[\sum_{k=0}^d \omega^k \otimes \omega^{d-k}~.\]

Let $f:\H^d_\C \to \H^d_\C$ be a smooth $\rho$-equivariant map and define
\[\mathrm{Graph}(f) = \Gamma_\rho \backslash \{(x,f(x)), x\in \H^d_\C\} \subset \Gamma_\rho \backslash \H^d_\C\times \H^d_\C~.\]

As in the $\SO_0(d,1)$ case, we have
\begin{eqnarray*}
\Vol \left(\Gamma_\rho \backslash \SU(d,1)\right) & = & \Vol(\SU(d+1)) \left |\int_{\Graph(f)} \sum_{k=0}^d \omega^k \otimes \omega^{d-k} \right| \\
& = & \Vol(\SU(d+1)) \left | \sum_{k=0}^d \int_{\Gamma \backslash \H^d_\C} \omega^k \wedge f^* \omega^{d-k} \right | \\
& = & \Vol(\SU(d+1)) \left | \sum_{k=0}^d \tau_k(\rho) \right |~.
\end{eqnarray*}
  
\end{proof}

\section{Obstruction to the existence of compact Clifford--Klein forms} \label{s:NonExistence}

In this section, we return to the general case of a reductive homogeneous space $G/H$.

Assume that the form $\omega_{G,H}$ (or equivalently, the form $\omega_{G,H}^U$) vanishes. Then Theorem \ref{t:VolumeCliffordKlein} implies that the volume of a compact quotient of $G/H$ should be~$0$. Therefore, such a compact quotient simply cannot exist.

As a first application of this obstruction, one obtains a proof of Kobayashi's rank conjecture (Theorem \ref{t:KobayashiRankConj}), which follows directly from the first point of Theorem \ref{t:RankCondition}:

\begin{theo}
If $\rk(G) - \rk(K) < \rk(H) - \rk(L)$, then $G/H$ does not have compact quotients.
\end{theo}

Unfortunately, this theorem does not provide any new example of homogeneous spaces without compact quotients. Indeed, Morita independently proved in \cite{MoritaPreprint} that this theorem is implied by the cohomological obstruction he described in \cite{Morita15}.\\

In this section, we give three other ways of proving that the form $\omega_{G,H}$ vanishes, leading to the proof of Theorem \ref{t:AdvanceKobayashiConj}.

\begin{theo} \label{t:VanishingForm1}
For the following pairs $(G,H)$, the volume form $\omega_{G,H}$ vanishes and $G/H$ does not admit any compact Clifford--Klein form.
\begin{itemize}
\item[$(1)$] $G= \SO_0(p,q+r)$, $H= \SO_0(p,q)$, $p,q,r>0$, $p$ odd;
\item[$(2)$] $G= \SL(n,\R)$, $H= \SL(m,\R)$, $1<m<n$, $m$ even.
\end{itemize}
\end{theo}

\begin{proof}[Proof of Theorem \ref{t:VanishingForm1}]

Recall that, by Lemma \ref{l:ComputationOmegaGH}, the form $\omega_{G,H}$ at the point $x_0 = K$ is given by
\[(\omega_{G,H})_{x_0} = \int_{K/L} \Ad_u^* \omega_{\k^\perp \cap \h^\perp}\ \d \omega_{K/L}(u)~.\]

In both cases, we exhibit an element $\Omega \in K$ whose action on $\g$ stabilizes $\k^\perp \cap \h^\perp$ and whose induced action on $\k^\perp \cap \h^\perp$ has determinant~$-1$. It follows that
\begin{eqnarray*}
\omega_{G,H} & = & \int_{K/L} {\Ad_U}_* \omega_{\k^\perp\cap \h^\perp}\ \d \vol_{K/L}(U) \\
\ & = & \int_{K/L} {\Ad_{U\Omega}}_* \omega_{\k^\perp\cap \h^\perp}\ \d \vol_{K/L}(U)\\
\ & = & \int_{K/L} - {\Ad_U}_* \omega_{\k^\perp\cap \h^\perp}\ \d \vol_{K/L}(U)\\
\ & = & - \omega_{G,H}~,
\end{eqnarray*}
hence $\omega_{G,H} = 0$. 

For both cases in Theorem \ref{t:VanishingForm1}, we now describe $\k^\perp \cap \h^\perp$ as a space of matrices and we give a choice of an element $\Omega$. This element $\Omega$ simply multiplies certain coefficients of the matrices in $\k^\perp\cap \h^\perp$ by $-1$ and we leave to the reader the verification that the induced action on $\k^\perp\cap \h^\perp$ has determinant $-1$.

\begin{itemize}
\item[(1)] $G= \SO_0(p,q+r)$, $H= \SO_0(p,q)$, $p,q,r>0$, $p$ odd:\\

In this case, $K=\SO(p) \times \SO(q+r)$ and $\k^\perp\cap \h^\perp$ is the space of matrices of the form
\[
\left( \begin{array}{@{}C{2cm}@{}|@{}C{1.5cm}@{}}
 \Huge{0} & \begin{array}{@{}C{0pt}@{}C{0.7cm}@{}|@{}C{0.8cm}@{}} \rule{0pt}{2cm} & 0 & \transp{A} \\[-4pt] \end{array} \\ \hline
\begin{array}{@{}C{0pt}@{}C{2cm}@{}} \rule{0pt}{0.7cm} & 0 \\[-4pt] \hline \rule{0pt}{0.8cm} & A \\[-4pt] \end{array} & \Huge{0} \\
\end{array} \right)~,\]
with $A \in \M_{r,p}(\R)$. We take $\Omega$ to be the diagonal matrix such that $\Omega_{ii} = -1$ when $i = p+q$ or $p+q+1$ and $\Omega_{ii} = 1$ otherwise.\\

\item[(2)] $G= \SL(n,\R)$, $H= \SL(m,\R)$, $m$ even:\\

In this case, $K=\SO(n)$ and $\k^\perp\cap \h^\perp$ is the space of matrices of the form
\[
\left( \begin{array}{@{}C{0pt}@{}C{2cm}@{}|@{}C{1.5cm}@{}}
 \rule{0pt}{2cm} & \lambda \I_{m} & A \\[-4pt] \hline
 \rule{0pt}{1.5cm} & \transp{A} & B \\[-4pt]
 \end{array}
 \right) ~,\]
with $A \in \M_{m,n-m}(\R)$, $B \in \Sym_{n-m}(\R)$ and $\lambda \in \R$ satisfying $\Tr(B)+ m \lambda = 0$. We take $\Omega$ to be the diagonal matrix such that $\Omega_{ii} = -1$ when $i = m$ or $m+1$ and $\Omega_{ii} = 1$ otherwise.

\end{itemize}

\end{proof}

We now turn to another way of proving that $\omega_{G,H}$ vanishes. Recall that $\omega_{G,H}$ vanishes if an only if the corresponding form $\omega_{G,H}^U$ on $G_U/K$ vanishes. By Theorem \ref{t:PoincareDual}, this happens whenever $\iota_*[H_U/L]$ vanishes in $\HH_\bullet(G_U/K, \Q)$.

\begin{theo} \label{t:VanishingForm2}
If $G$ is the complexification of $H$, then the form $\omega_{G,H}$ vanishes if and only if $\HH^\bullet_{even}(H_U/L,\Q) \neq 0$. In particular, for the following pairs $(G,H)$, the space $G/H$ has no compact Clifford--Klein form:
\begin{itemize}
\item[$(3)$] $G= \SO(p+q,\C)$, $H = \SO_0(p,q)$, $p, q>1$ or $p=1$ and $q$ even;
\item[$(4)$] $G = \SL(p+q,\C)$, $H = \SU(p,q)$, $p,q>0$;
\item[$(5)$] $G = \Sp(2(p+q),\C)$, $H = \Sp(p,q)$;
\item[$(6)$] $G = \SO(2n,\C)$, $H = \SO^*(2n)$.
\end{itemize}

\end{theo}

\begin{proof}
Since $G$ is the complexification of $H$, we have $H_U = K$. Since $G$ is a complex Lie group, we have $G_U = K\times K$. It follows that $G_U/K$ is the group space $K$ and that $H_U/L = K/L$ is mapped to $K$ by
\[\iota: g \mapsto g\ \theta(g)^{-1}~,\]
where $\theta$ is the involution of $H_U$ whose fixed point set is $L$. By Proposition \ref{t:PoincareDual}, $\omega_{G,H}$ does not vanish if and only if $\iota_*[H_U/L]$ does not vanish in $\HH_\bullet(H_U,\Q)$, which happens if and only if the image of $\iota^*$ contains a non-zero cohomology class of degree $\dim(H_U/L)$.

By the work of Cartan \cite{Cartan50}, the cohomology algebra of $H_U$ is generated by bi-invariant forms of odd degree. Moreover, $\iota^*$ maps $\HH^\bullet(H_U,\Q)$ surjectively to $\HH^\bullet_{odd}(H_U/L,\Q)$. Since $\HH^\bullet(H_U/L,\Q) = \HH^\bullet_{odd}(H_U/L,\Q)\otimes \HH^\bullet_{even}(H_U/L,\Q)$, the image of $\iota^*$ contains a form of top degree if and only if $\HH^\bullet_{even}(H_U/L,\Q) \equiv 0$.

Let us now prove $(3)$, $(4)$, $(5)$ and $(6)$. For $H= \SU(p,q)$, $\Sp(p,q)$, $\SO^*(2n)$ or $\SO_0(p,q)$ with $p$ or $q$ even, one actually has $\rk(H_U) = \rk(L)$. Therefore the cohomology of $H_U/L$ is concentrated in even degree and the image of the map $\iota^*$ is trivial. In particular, it does not contain a non-zero class of top degree.

It remains to treat the case where $H = \SO_0(p,q)$ with $p$ and $q$ odd. Note that $H_U/L$ is the Grassmannian of $p$-planes in $\R^{p+q}$. In that case, $\rk(H_U) - \rk(L) = 1$ and $\HH^\bullet_{odd}(H_U/L)$ thus has dimension $1$. If $\HH^\bullet_{even}(H_U/L)$ vanished, then the whole cohomology algebra of $H_U/L$ would be one dimensional. This is well-known to be true if and only if $p$ or $q$ equals $1$.
\end{proof}

\begin{theo} \label{t:VanishingForm3}
For the following pairs $(G,H)$, the volume form $\omega_{G,H}$ vanishes and $G/H$ does not admit any compact Clifford--Klein form.
\begin{itemize}
\item[$(7)$] $G = \SL(p+q,\R)$, $H = \SO_0(p,q)$, $p,q>1$;
\item[$(8)$] $G = \SL(p+q, \mathcal{H})$, $H = \Sp(p,q)$, $p,q>1$.
\end{itemize}
(Here $\mathcal{H}$ denotes de field of quaternions.)
\end{theo}

\begin{proof}
Again, we prove that $\iota_*[H_U/L]$ vanishes in $\HH_\bullet(G_U/K)$, this time by showing that $H_U/L$ is homotopically trivial in $G_U/K$.\\

The compact dual to $\SL(p+q,\R)$ is $\SU(p+q)$. Let us set $V = \R^p\times \{0\}$ and $W= \{0 \} \times \R^q$ in $\C^{p+q}$. Then we can identify $K$ with $\Stab(V\oplus W) \subset \SU(p+q)$, $H_U$ with $\Stab(V \oplus i W)$, and $L$ with $\Stab(V) \cap \Stab(W)$.

For $t\in [0,1]$, let $g_t$ be the map in $\U(p+q)$ defined by
\[g_t(x) = x \textrm{ if } x\in V~,\]
\[g_t(x) = e^{\frac{it\pi}{2}} x \textrm{ if } x\in W~.\]
The conjugation by $g_t$ preserves $L$ and one can thus define
\[\function{\phi_t}{H_U/L}{G_U/K}{hL}{g_t h g_t^{-1} K~.}\]
The conjugation by $g_t$ sends $H_U=\Stab(V \oplus i W)$ to $\Stab(V \oplus i e^{\frac{it\pi}{2}} W)$. In particular, $\phi_0$ is the map $\iota: H_U/L \to G/K$, and $\phi_1$ sends $H_U/L$ to a point. Therefore the map $\iota :H_U/L \to G_U/K$ is homotopically trivial, and in particular $\iota_*[H_U/L] = 0$ in $\HH_\bullet(G_U/K)$.\\

Case $(8)$ can be treated similarly: set $V = \C^p\times \{0\}$ and $W= \{0 \} \times \C^q$ in $\mathcal{H}^{p+q}$. Then $G_U = \Sp(p+q)$ and one can identify $K$ with $\Stab(V\oplus W)$, $H_U$ with $\Stab(V \oplus j W)$ (where $i,j,k$ denote the three complex structures defining the quaternionic structure of $\mathcal{H}$), and $L$ with $\Stab(V) \cap \Stab(W)$. One obtains the same contradiction as before by conjugating $H_U$ by the linear transformation $g_t$ that is the identity on $V$ and the multiplication by $e^{\frac{t\pi}{2} j}$ on $W$.
\end{proof}

\subsection{Relation to earlier works} \label{ss:EarlierResults}

In the past decades, many different works have been devoted to finding various obstructions to the existence of compact Clifford--Klein forms. Let us detail where Theorems \ref{t:VanishingForm1}, \ref{t:VanishingForm2} and \ref{t:VanishingForm3} fit in this litterature.

\begin{itemize}
\item Case $(1)$ of Theorem \ref{t:VanishingForm1} extends results of Kulkarni \cite{Kulkarni81}, Kobayashi--Ono \cite{KobayashiOno90} and their recent improvement by Morita \cite{Morita15}, where both $p$ and $q$ are assumed to be odd. When specified to $r=1$, we obtain in particular that $\H^{p,q} = \SO_0(p,q+1)/\SO_0(p,q)$ does not admit a compact quotient when $p$ is odd. This is an important step toward Kobayashi's space form conjecture.\\

\item The case of $\SL(n,\R)/ \SL(m,\R)$ has also been extensively studied. It is conjectured that $\SL(n,\R)/ \SL(m,\R)$ never admits a compact quotient for $1 < m < n$ (see for instance \cite[Conjecture 3.3.10]{KobayashiYoshino05}). Kobayashi proved that such quotients do not exist for $n< \lceil 3/2 m \rceil$ \cite{Kobayashi92} and Labourie, Mozes and Zimmer extended the result to $m\leq n-3$ with completely different methods (\cite{Zimmer94}, \cite{LMZ95}, \cite{LabourieZimmer95}). On the other side, Benoist proved that $\SL(2n+1,\R)/\SL(2n,\R)$ does not admit a compact quotient \cite{Benoist96}. Case $(2)$ of Theorem \ref{t:VanishingForm1} recovers Benoist's result\footnote{Benoist's result is actually stronger: every discrete group acting properly discontinuously on $\SL(2n+1,\R)/\SL(2n,\R)$ is virtually Abelian.} and also implies that $\SL(2n+2,\R)/\SL(2n,\R)$ does not admit a compact quotient, which was previously known only for $n=1$ \cite{Shalom00}.\\

\item Theorem \ref{t:VanishingForm2} is mostly new. Note that the so-called \emph{Calabi--Markus phenomenon} implies that the symmetric spaces $\SL(n,\C)/\SL(n,\R)$ and $\Sp(2n,\C)/\Sp(2n,\R)$ do not admit compact Clifford--Klein forms. Therefore, the only classical Lie groups $H$ for which $H_\C/H$ might admit a compact Clifford--Klein form are $\SO(p,1)$ with $p$ even and $\SL(n,\mathcal{H})$ (where $\mathcal{H}$ denotes the quaternions). Interestingly, the homogeneous space $\SO(8,\C)/\SO(7,1)$ is known to admit compact Clifford--Klein forms (see \cite[Corollary 3.3.7]{KobayashiYoshino05}).\\

\item Theorem \ref{t:VanishingForm3} improves a recent result of Morita \cite{Morita15}, where $p$ and $q$ are assumed to be odd. It was first proved by Kobayashi when $p=q$ \cite{Kobayashi96} and by Benoist when $p=q+1$ \cite{Benoist96}. More precisely, Benoist proved that every discrete group acting properly discontinuously on $\SL(2p+1)/\SO_0(p,p+1)$ is virtually Abelian (in particular, its action is not cocompact). He also constructed proper actions of a free group of rank $2$ as soon as $p\neq q$ or $q+1$.\\

\end{itemize}

The proof of Theorem \ref{t:VanishingForm1} can be adapted to show the vanishing of $\omega_{G,H}$ in many other cases that we did not include because the non-existence of compact Clifford--Klein forms was already known. We can prove for instance that $\SL(n,\R)/\SL(m,\R) \times \SL(n-m,\R)$ does not have any compact quotient for $0<m<n$, $n$ odd (see \cite{Benoist96}), that $\SO(n,\C)/\SO(m,\C) \times \SO(n-m,\C)$ does not have any compact quotient for $1<m<n-1$, $n$ odd (see \cite{Kobayashi92}), or that $\SO(n,\C)/\SO(m,\C)$ does not have any compact quotient for $1<m<n$, $m$ even (see \cite{Kobayashi96,Benoist96}).\\

\subsection{Relation to Yosuke Morita's work} \label{ss:Morita}

The first version of this article did not contain Sections \ref{s:CompactDual}, \ref{s:InclusionSymSpaces} and \ref{s:GroupSpaces}. Section \ref{s:Rigidity} stated a theorem of \emph{local rigidity} of the volume and Section \ref{s:NonExistence} contained only a refinded version of Theorem\ref{t:VanishingForm1}. After our preprint appeared on arXiv, Yosuke Morita posted a preprint where he uses a cohomological obstruction to prove the non-existence of compact quotients of certain reductive homogeneous spaces. In particular, he obtained Theorems \ref{t:VanishingForm1}, \ref{t:VanishingForm2} and \ref{t:VanishingForm3}. This motivated me to find new ways of proving the vanishing of the form $\omega_{G,H}$ and led me to the compact duality argument and theorems \ref{t:VanishingForm2} and \ref{t:VanishingForm3} which improved significantly this paper.

After discussing with Morita, it seems likely, though not obvious, that our two obstructions are in fact equivalent. We hope to prove this equivalence in a future work.

\section{Local foliations of $G/H$ and global foliations of $\Gamma \backslash G/H$} \label{s:LocalFibrations}

The results of this paper where driven by the idea that compact Clifford--Klein forms $\Gamma \backslash G/H$ should ``look like'' $(K/L)$-bundles over a classifying space for $\Gamma$. This was suggested by the following theorem:

\begin{CiteThm}[Gu\'eritaud--Kassel, \cite{GueritaudKassel}] \label{t:FibrationGK}
Let $\Gamma$ be a discrete torsion-free subgroup of $\SO_0(d,1) \times \SO_0(d,1)$ acting properly discontinuously and cocompactly on $\SO_0(d,1)$ (by left and right multiplication). Then $\Gamma$ is isomorphic to the fundamental group of a closed hyperbolic $d$-manifold $B$, and $\Gamma \backslash \SO_0(d,1)$ admits a fibration over $B$ with fibers of the form
\[g \SO(d) h^{-1}, \quad g,h \in \SO_0(d,1)~.\]
\end{CiteThm}

More generally, we conjecture the following:
\begin{conj}
Let $G/H$ be a reductive homogeneous space (with $G$ and $H$ connected), $L$ a maximal compact subgroup of $H$ and $K$ a maximal compact subgroup of $G$ containing $L$. Let $\Gamma$ be a torsion free discrete subgroup of $G$ acting properly discontinuously and cocompactly on $G/H$. Then there exists a closed manifold $B$ of dimension $p$ such that
\begin{itemize}
\item the fundamental group of $B$ is isomorphic to $\Gamma$,
\item the universal cover of $B$ is contractible,
\item $\Gamma \backslash G/H$ admits a fibration over $B$ with fibers of the form $g K/L$ for some $g\in G$.
\end{itemize}
\end{conj}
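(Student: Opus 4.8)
The plan is to upgrade the \emph{homological} fibration established in Section~\ref{s:HomologicalFibration} to a genuine one, using Theorem~\ref{t:FibrationGK} as a blueprint. Proposition~\ref{p:HomDimGamma}, together with the fact that $M = \Gamma\backslash G/H$ is a closed oriented manifold, already shows that $\Gamma$ is a Poincar\'e duality group of dimension $p$: the orientable fibration $\Gamma\backslash G/L \to \Gamma\backslash G/K$ has closed oriented total space (homotopy equivalent to $M$) and closed oriented fiber $K/L$, so its base $B_0 = \Gamma\backslash G/K$ is a Poincar\'e duality complex of dimension $p$ and a classifying space for $\Gamma$. The obstruction to concluding directly is that $B_0$ is an \emph{open} manifold of dimension $\dim(G/K) > p$, so the tautological $K/L$-bundle $\Gamma\backslash G/L \to B_0$ fibers over the wrong base. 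The task is thus to replace $B_0$ by a \emph{closed} aspherical $p$-manifold $B$ and to promote the tautological bundle to a $K/L$-fibration of $M$ itself.

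First I would aim to construct, $\Gamma$-equivariantly, a contractible $p$-manifold $N$ carrying a proper, free and cocompact $\Gamma$-action, together with a $\Gamma$-equivariant fibration $\Psi\colon G/H \to N$ whose fibers are the leaves $gKH/H \cong K/L$. The quotient $B = \Gamma\backslash N$ would then be closed, aspherical, of dimension $p$, with $\pi_1(B)\simeq\Gamma$, and $\Psi$ would descend to the desired fibration of $M$. This is exactly the mechanism behind Theorem~\ref{t:FibrationGK}: for a group space the symmetric space $G/K$ is a product of two copies of the symmetric space of the acting factor, $\Gamma$ acts as a cocompact lattice on one of them, that copy plays the role of $N$, and $\Psi$ is assembled from the contraction dynamics (Theorem~\ref{t:QuotientsSU(d,1)}). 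In general I would try to extract $N$ as a $\Gamma$-invariant transversal inside $G/K$ --- equivalently, as the leaf space of a $\Gamma$-invariant distribution complementary to the leaf directions --- and then use the geometry of the proper action to define $\Psi$.

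The main obstacle is that the submanifolds $gKH/H$ do \emph{not} foliate $G/H$ a priori: through a point $gH$ there passes an entire $H/L$-family of such leaves (those with $g'K\in gHK/K$), so producing a fibration requires selecting, $\Gamma$-equivariantly and smoothly, a single leaf through each point. In the rank-one group-space case this selection is furnished by the contraction and sharpness properties of the action, but in general we have no analogue of the structure theorems of Kobayashi, Kassel and Gu\'eritaud--Kassel, and only the \emph{coarse} information of the Benoist--Kobayashi properness criterion \cite{Benoist96,Kobayashi96}. I therefore expect the crux to be a \emph{sharpness} statement, upgrading properness --- an asymptotic condition on Cartan projections --- to an honest $\Gamma$-invariant transverse structure on $G/H$ that integrates to the fibration. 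Even granting such a structure, realizing $B$ as a \emph{closed} aspherical manifold of the exact dimension $p$ is, abstractly, an instance of the open problem of whether every Poincar\'e duality group is the fundamental group of a closed aspherical manifold; the hope is that the homogeneous origin of $\Gamma$ supplies the extra rigidity needed, but controlling this simultaneously with the requirement that the fibers be exactly of the form $gK/L$ is what I expect to make the statement genuinely hard, and is presumably why it is posed here as a conjecture.
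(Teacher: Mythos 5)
You have not proved the statement, and neither does the paper: this is stated as an open conjecture in Section \ref{s:LocalFibrations}, and the paper contains no proof of it. The only support the paper offers is (i) the Gu\'eritaud--Kassel structure theorem (Theorem \ref{t:FibrationGK}), which establishes the conjecture for group spaces of $\SO_0(d,1)$, and (ii) Proposition \ref{p:NoLocalFoliation}, a consistency check showing that whenever the form $\omega_{G,H}$ vanishes (hence no compact Clifford--Klein form exists), not even a non-empty open subset of $G/H$ can be foliated by leaves of the form $gK/L$. Your proposal is therefore correctly calibrated: you recognize that the statement is genuinely open, you reconstruct the motivation behind it (the homological fibration of Section \ref{s:HomologicalFibration} and the rank-one group-space case), and the obstacles you isolate are exactly the reasons it remains a conjecture --- through each point of $G/H$ there passes an $H/L$-family of leaves $gK/L$, so a fibration demands a smooth $\Gamma$-equivariant selection of one leaf per point; no structure theorem analogous to Kobayashi--Kassel--Gu\'eritaud--Kassel is available outside the rank-one group-space setting; and realizing the group $\Gamma$ by a \emph{closed} aspherical manifold of dimension exactly $p$, with the prescribed homogeneous fibers, runs into a Wall-type realization problem.

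Two corrections to your first paragraph, where you do make checkable claims. First, the total space $E=\Gamma\backslash G/L$ is \emph{not} a closed manifold: it fibers over the closed manifold $M=\Gamma\backslash G/H$ with non-compact contractible fiber $H/L$, so it is only homotopy equivalent to $M$ --- which is what your argument actually needs. Second, the deduction ``fiber and total space satisfy Poincar\'e duality, hence the base $B_0=\Gamma\backslash G/K$ is a Poincar\'e duality complex of dimension $p$'' is not formal from Proposition \ref{p:HomDimGamma}: knowing that $\Gamma$ has homological dimension $p$ and $\HH_p(\Gamma,\Z)\simeq\Z$ is strictly weaker than Poincar\'e duality. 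To conclude that $\Gamma$ is a Poincar\'e duality group of dimension $p$ you need Gottlieb's theorem on fibrations of Poincar\'e complexes applied to $K/L \to E \to B_0$ (with $E$ finitely dominated, being homotopy equivalent to $M$). With that reference your claim is correct, but, as you yourself conclude, passing from a Poincar\'e duality group to a closed aspherical $p$-manifold $B$, and simultaneously promoting the homological statement to an honest fibration of $\Gamma\backslash G/H$ with fibers $gK/L$, is precisely the open content of the conjecture; none of the tools developed in the paper (fiberwise integration, compact duality, the Cartan--Borel bigrading) addresses this selection problem.
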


To support this conjecture, we note that the vanishing of the form $\omega_{G,H}$ (which implies the non-existence of compact Clifford--Klein forms) is actually an obstruction to the existence of a \emph{local} fibration by copies of $K/L$.

\begin{prop} \label{p:NoLocalFoliation}
Let $G/H$ be a reductive homogeneous space (with $G$ and $H$ connected), $L$ a maximal compact subgroup of $H$ and $K$ a maximal compact subgroup of $G$ containing $L$. If the form $\omega_{G,H}$ on $G/K$ vanishes (and in particular for all the pairs $(G,H)$ in Theorem \ref{t:AdvanceKobayashiConj}), then no non-empty open domain of $G/H$ admits a foliation with leaves of the form $g K/L$. 
\end{prop}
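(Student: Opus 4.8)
The plan is to argue by contraposition. Suppose some non-empty open set $U\subseteq G/H$ carries a foliation whose leaves are (pieces of) the submanifolds $\psi(\pi^{-1}(gK))$, where $\pi\colon G/L\to G/K$ and $\psi\colon G/L\to G/H$ are the two canonical projections; these are exactly the leaves ``of the form $gK/L$''. I will produce a $p$-dimensional submanifold $V\subseteq G/K$ with $\int_V\omega_{G,H}\neq0$, so that $\omega_{G,H}$ is not the zero form. The mechanism is Proposition~\ref{p:FiberIntegration}, which converts a manifestly positive volume integral in $G/H$ into the integral of $\omega_{G,H}$ over $V$.

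First I would record the local dictionary between leaves and points of $G/K$. Each $\pi$-fiber $\pi^{-1}(gK)=gK/L$ is carried by $\psi$ onto the leaf $\psi(\pi^{-1}(gK))\subseteq G/H$; since $\k\cap\h=\l$, the tangent spaces of the $\pi$-fibers and of the $\psi$-fibers $gH/L$ are complementary inside $G/L$, so $\psi$ restricts to a covering (a diffeomorphism when $K\cap H=L$) of each $\pi$-fiber onto the corresponding leaf. Thus to every leaf I may attach the point $gK\in G/K$ of the $\pi$-fiber lying over it, and, the foliation being smooth, this assignment is smooth. Fixing $y_0\in U$ and a small $p$-dimensional transversal $T\subseteq U$ to the foliation through $y_0$, the leaf-to-point map carries $T$ to a $p$-dimensional submanifold $V\subseteq G/K$.

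Next I would check the transversality that makes $\psi$ restrict to a diffeomorphism from $\pi^{-1}(V)$ onto an open subset of $G/H$. Let $s\colon U\to G/L$ be the local section of $\psi$ sending each leaf into its $\pi$-fiber. Because $s$ maps each leaf diffeomorphically onto a $\pi$-fiber and is a section of $\psi$, at each point of $s(T)$ the tangent space of $G/L$ splits as
\[T(G/L)=T(\pi\text{-fiber})\ \oplus\ T(s(T))\ \oplus\ T(\psi\text{-fiber}).\]
Applying $d\pi$, whose kernel is $T(\pi\text{-fiber})$, shows that $TV=d\pi\bigl(T(s(T))\bigr)$ is complementary in $T(G/K)$ to $d\pi\bigl(T(\psi\text{-fiber})\bigr)$, i.e. that $V$ is transverse to each embedded copy $\pi(\psi^{-1}(y))$ of $H/L$ inside $G/K$. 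Hence, after shrinking $T$, the map $\psi$ is a diffeomorphism from $\pi^{-1}(V)$ onto the open set $\psi(\pi^{-1}(V))=\bigcup_{gK\in V}\psi(\pi^{-1}(gK))$.

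Finally, Proposition~\ref{p:FiberIntegration} yields
\[\Bigl|\int_V\omega_{G,H}\Bigr|=\Bigl|\int_{\pi^{-1}(V)}\psi^*\vol_X\Bigr|=\Bigl|\int_{\psi(\pi^{-1}(V))}\vol_X\Bigr|>0,\]
because $\vol_X$ is a volume form on $G/H$ and $\psi(\pi^{-1}(V))$ is a non-empty open set. Therefore $\omega_{G,H}\neq0$, which is the contrapositive of the statement. I expect the transversality step to be the main obstacle: one must guarantee that the submanifold $V$ built from the foliation is genuinely transverse to the embedded copies of $H/L$, so that no cancellation spoils $\int_{\pi^{-1}(V)}\psi^*\vol_X$. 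This is exactly where the hypotheses that the leaves have the form $gK/L$ and that $\k\cap\h=\l$ enter, through the threefold splitting displayed above.
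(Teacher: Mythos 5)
Your proof is correct and takes essentially the same route as the paper's: both identify the (local) space of leaves with a $p$-dimensional submanifold $V \subseteq G/K$, observe that $\psi$ carries $\pi^{-1}(V)$ diffeomorphically onto a non-empty open subset of $G/H$, and apply Proposition~\ref{p:FiberIntegration} to equate the positive volume of that open set with $\bigl|\int_V \omega_{G,H}\bigr|$, forcing $\omega_{G,H} \neq 0$. The only difference is one of detail: you justify the leaf-to-point map and the transversality/diffeomorphism claims explicitly (via the threefold tangent splitting and $K\cap H = L$), whereas the paper asserts them directly from the fact that the stabilizer of $K/L \subseteq G/H$ is exactly $K$.
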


The non-existence of such local foliations in certain homogeneous spaces may be quite surprising. For instance, if $G= \SO_0(2n-1,2)$ and $H = \SO_0(2n-1,1)$, then $G/H$ is the \emph{anti-de Sitter space} $\AdS_{2n}$ (for which the non-existence of compact Clifford--Klein forms was proven by Kulkarni \cite{Kulkarni81}). In that case, $K/L$ is a timelike geodesic and we obtain the following corollary:

\begin{coro}
No open domain of the even dimensional anti-de Sitter space can be foliated by complete timelike geodesics.
\end{coro}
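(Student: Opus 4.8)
The plan is to obtain the corollary as a direct specialization of Proposition \ref{p:NoLocalFoliation}, once the geometric data of anti-de Sitter space has been matched with the group-theoretic data of that proposition. First I would fix the presentation $G = \SO_0(2n-1,2)$, $H = \SO_0(2n-1,1)$, so that $G/H = \AdS_{2n} = \H^{2n-1,1}$, and record the maximal compact subgroups $K = \SO(2n-1) \times \SO(2)$ and $L = \SO(2n-1) = K \cap H$. Thus $K/L \cong \SO(2)$, and under the embedding $kL \mapsto kH$ of $K/L$ into $G/H$ it is realized as the orbit $K \cdot x_0 = \SO(2) \cdot x_0$ of the base point $x_0 = eH$, which is exactly a closed timelike geodesic (as already noted before the statement).

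Next I would check that the leaves of the form $gK/L$ are precisely the complete timelike geodesics of $\AdS_{2n}$. Since $G$ acts by isometries, each translate $g \cdot (K \cdot x_0)$ is again a complete timelike geodesic, so every set $gK/L$ is one. For the converse inclusion, which is the one actually needed to invoke the proposition, I would argue by transitivity: $G$ acts transitively on $\AdS_{2n}$, and the isotropy group $H = \SO_0(2n-1,1)$ acts on $T_{x_0}\AdS_{2n} \cong \R^{2n-1,1}$ as the standard representation, hence transitively on the timelike directions (it acts transitively on each sheet of the unit timelike hyperboloid, the time direction being the single negative one). Therefore every complete timelike geodesic through $x_0$ is an $H$-translate of $K \cdot x_0$, and combining with transitivity on points shows that every complete timelike geodesic has the form $gK/L$.

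With these identifications the conclusion is immediate. Since $p = 2n-1$ is odd, the pair $(G,H)$ is the case $q = r = 1$ of Theorem \ref{t:VanishingForm1} (equivalently case $(1)$ of Theorem \ref{t:AdvanceKobayashiConj}), so the form $\omega_{G,H}$ vanishes. A foliation of a non-empty open domain of $\AdS_{2n}$ by complete timelike geodesics would then be a foliation with leaves of the form $gK/L$, which Proposition \ref{p:NoLocalFoliation} forbids. Hence no such foliation can exist.

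I expect the only genuinely geometric step to be the identification in the second paragraph, namely the verification that \emph{every} complete timelike geodesic is a translate $gK/L$, and not merely that the $gK/L$ are timelike geodesics. Everything else is a direct quotation of the vanishing result and of Proposition \ref{p:NoLocalFoliation}. The transitivity of $H$ on timelike directions is the standard behaviour of the Lorentzian isotropy representation, so I do not anticipate any serious difficulty beyond making this matching explicit.
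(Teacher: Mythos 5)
Your proof is correct and follows essentially the same route as the paper: the vanishing of $\omega_{G,H}$ for $G=\SO_0(2n-1,2)$, $H=\SO_0(2n-1,1)$ via case $(1)$ of Theorem \ref{t:VanishingForm1} (with $p=2n-1$ odd, $q=r=1$), combined with Proposition \ref{p:NoLocalFoliation} and the identification of the sets $gK/L$ with the complete timelike geodesics. The only difference is that you spell out the converse identification (transitivity of the isotropy representation on timelike directions), which the paper leaves implicit; this is a worthwhile but routine verification.
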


This leads to the following more general question, that may be of independent interest:
\begin{ques}
Let $G/H$ be a reductive homogeneous space, $G'$ a closed subgroup of $G$ and $H'=G'\cap H$. When does $G/H$ admit an open domain with a foliation by leaves of the form $g G'/H'$?
\end{ques}

\begin{proof}[Proof of Proposition \ref{p:NoLocalFoliation}]
Assume that there exists a non-empty domain $U$ in $X=G/H$ with a foliation by leaves $(F_v)_{v\in V}$ of the form $g_v K/L$. Since the stabilizer in $G$ of $K/L \subset G/H$ is exactly $K$, the space of leaves $V$ can be seen as a submanifold of dimension $p$ in $G/K$. Set $U' = \pi^{-1}(V)$, where $\pi$ is the projection from $G/L$ to $G/K$. Then the projection $\psi$ from $G/L$ to $G/H$ induces a diffeomorphism from $U'$ to $U$. We thus have
\[\int_U \vol_X = \int_{U'} \psi^* \vol_X~.\]
On the other hand, by construction of $\omega_{G,H}$, we have
\[\int_{U'} \psi^* \vol_X = \int_V \omega_{G,H}~.\]
Since $U$ is non-empty, its volume is non-zero, hence the form $\omega_{G,H}$ cannot vanish.
\end{proof}

\bibliographystyle{plain}
\bibliography{biblio}

\end{document}